\title{Around $L^1$ (un)boundedness \\ of Bergman and Szeg\"o projections}
\author{Gian Maria Dall'Ara}
\address{Istituto Nazionale di Alta Matematica "F. Severi"\\ Unità di Ricerca SNS Pisa}
\email{dallara@altamatematica.it}
\thanks{}
\date{\today}
\newcommand{\C}{\mathbb{C}}
\newcommand{\R}{\mathbb{R}}
\newcommand{\Z}{\mathbb{Z}}
\newcommand{\N}{\mathbb{N}}
\newtheorem{thm}{Theorem}[section]
\newtheorem{dfn}[thm]{Definition}
\newtheorem{ex}[thm]{Example}
\newtheorem*{ex2}{Setting A}
\newtheorem*{ex3}{Setting B}
\newtheorem{prop}[thm]{Proposition}
\newtheorem{lem}[thm]{Lemma}
\newtheorem{cor}[thm]{Corollary}
\begin{document}

\maketitle

\begin{abstract}
We consider the problem of $L^1$ (un)boundedness for a wide class of orthogonal projections, including Bergman projections on domains in complex manifolds and Szeg\"o projections on abstract CR manifolds. 	
\end{abstract}

\tableofcontents

\section{Introduction}

A variety of interesting operators in complex and harmonic analysis arise as orthogonal projections on $L^2$ spaces, examples being Bergman and Szeg\"o projections. As such, they are automatically bounded on $L^2$, while understanding their behavior on different function spaces, e.g., $L^p$ spaces, may be a difficult problem. Let us formulate precisely this problem in an abstract setting. 

One is given a measure space $(X,\mu)$ and an orthogonal projection \[
\Pi:L^2(X,\mu)\longrightarrow H,
\] where $H$ is a closed subspace of $L^2(X,\mu)$, and wants to know for which values of $p\in [1,+\infty]$ the a priori bound \[
(B_p)\qquad ||\Pi(f)||_{L^p(X,\mu)}\leq C||f||_{L^p(X,\mu)}\qquad \forall f\in L^2(X,\mu)\cap L^p(X,\mu)
\] holds, for some finite positive constant $C$. 
The set \begin{equation}\label{I}
\mathcal{I}(\Pi):=\left\{\frac{1}{p}\colon \ (B_p) \text{ holds}\right\}
\end{equation}
is an interval containing $\frac{1}{p}=\frac{1}{2}$, and it is symmetric with respect to this value, i.e., $\alpha\in \mathcal{I}(\Pi)$ if and only if $1-\alpha \in \mathcal{I}(\Pi)$. These properties are immediate, and well-known, consequences of self-adjointness of $\Pi$ on $L^2(X, \mu)$ and interpolation theory (see, e.g., \cite{bergh_lofstrom}). \newline 
In view of this, given a projection $\Pi$, a natural question is \emph{whether $\mathcal{I}(\Pi)=[0,1]$ or, equivalently, whether $\Pi$ admits a bounded extension to $L^1(X,\mu)$.} Our goal is to investigate this question for Bergman, Szeg\"o, and other classes of projections. In the next two sections we present our main results about Bergman and Szeg\"o projections, deferring to Section \ref{overview_sec} a more general discussion.

\subsection{The $L^1$ (un)boundedness problem for Bergman projections}

We recall the definition of Bergman projection of a domain in complex Euclidean space.

\begin{dfn}[Bergman projections on domains in $\C^n$] Let $\lambda$ be the standard $2n$-dimensional Lebesgue measure on $\C^n$. If $D\subset\C^n$ is a domain, the associated Bergman projection $B_D$ is the orthogonal projection on $L^2(D)=L^2(D,\lambda)$ with range the Bergman space $A^2(D)$ of Lebesgue square-integrable holomorphic functions.\end{dfn}

The set $\mathcal{I}(B_D)$ defined in \eqref{I} has been determined to be $(0,1)$ for various classes of bounded and smoothly bounded domains, among which: strongly pseudoconvex domains \cite{phong_stein, lanzani_stein}, pseudoconvex domains of finite-type in $\C^2$ \cite{nagel_rosay_stein_wainger}, convex domains of finite-type in $\C^n$ \cite{mcneal_stein}, pseudoconvex domains with diagonalizable Levi form \cite{charpentier_dupain}. In all these cases, the $L^1$ unboundedness of $B_D$ is an obvious corollary of a very precise description of the corresponding Bergman kernels in terms of an appropriate pseudometric (for which we refer to \cite{mcneal_sing_int}). $L^1$ unboundedness of Bergman projections seems to be known in virtually any other class of domains considered in the literature, even when the much harder question of completely describing $\mathcal{I}(B_D)$ is still open: see \cite{barrett_worm, barrett_sahutoglu, edholm_mcneal} and the survey paper \cite{zeytuncu} for examples.

Considering the amount of evidence reported above, one is led to conjecture that $1\notin \mathcal{I}(B_D)$, at least for every bounded and smoothly bounded pseudoconvex domain in $\C^n$. We are not aware of any statement to this effect in the literature. Our first result proves the unboundedness without any pseudoconvexity assumption. 

\begin{thm}\label{intr_bergman_thm}
The Bergman projection $B_D$ of a bounded and smoothly bounded domain $D\subset \C^n$ is unbounded on $L^1$. 
\end{thm}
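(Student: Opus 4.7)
The plan is to argue by contradiction, reducing $L^1$-boundedness to a pointwise integrability condition on the Bergman kernel $K_D$ and then violating it at a suitable boundary point. The first step is that, by self-adjointness of $B_D$ on $L^2(D)$, boundedness on $L^1$ is equivalent, via duality, to boundedness on $L^\infty$; testing $B_D\colon L^\infty\to L^\infty$ against $f(w)=\overline{\operatorname{sgn} K_D(z,w)}$ yields
\[
\|B_D\|_{L^\infty\to L^\infty}\;=\;\sup_{z\in D}\int_D |K_D(z,w)|\,d\lambda(w).
\]
Thus it would suffice to exhibit $z_k\in D$ along which this integral diverges.

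Next I would produce a strictly pseudoconvex boundary point. Fix any $z_0\in D$, let $R=\max_{\overline D}|z-z_0|$, and let $p\in\overline D$ attain this maximum. Then $p\in\partial D$, $D\subset B:=B(z_0,R)$, and $p\in\partial D\cap\partial B$. Normalizing defining functions $\rho_D,\rho_B$ to share the inward unit normal at $p$, the inclusion $D\subset B$ combined with a second-order Taylor expansion along $\partial D$ forces the Hermitian Hessian of $\rho_D$ at $p$ to dominate that of $\rho_B$ on the common complex tangent space, so the Levi form of $\partial D$ at $p$ dominates that of $\partial B$; the latter is positive definite, hence $p$ is strictly pseudoconvex.

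Finally, take $z_k\to p$ along the inward real normal and set $d_k=\operatorname{dist}(z_k,\partial D)$. The plan here is to invoke a local version of the Fefferman-type asymptotic of $K_D$ at a strictly pseudoconvex boundary point—via a localization of the $\bar\partial$-Neumann problem, or a Bell--Ligocka-style transformation argument applied to a locally strictly pseudoconvex piece of $\partial D$—to obtain the Phong--Stein size estimate
\[
|K_D(z_k,w)|\;\gtrsim\;\tau(z_k,w)^{-(n+1)}
\]
for $w$ in a fixed neighborhood of $p$ in $D$, with $\tau$ the Koranyi-type pseudodistance attached to $p$. A dyadic decomposition of that neighborhood into nonisotropic boxes of volume $\sim r^{n+1}$ on which $|K_D|\sim r^{-(n+1)}$, summed over the $\sim\log(1/d_k)$ scales between $d_k$ and a fixed radius, then yields $\int_D|K_D(z_k,w)|\,d\lambda(w)\gtrsim\log(1/d_k)\to\infty$, contradicting the first step.

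The main obstacle is the third step: since $D$ is only assumed smoothly bounded and not pseudoconvex, the classical Fefferman/Phong--Stein asymptotics of the Bergman kernel at a strictly pseudoconvex boundary point do not apply off the shelf. One must establish a localization principle guaranteeing that, near $p$, $K_D$ inherits the leading behavior of a locally matching strictly pseudoconvex model, up to controllable corrections; making this precise—perhaps via Kohn's subelliptic estimates applied on a strictly pseudoconvex neighborhood basis of $p$, or by comparison with the Bergman projection on a locally constructed strictly pseudoconvex domain—is what I expect to be the technical heart of the author's argument.
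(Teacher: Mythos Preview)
Your Step~3 is a genuine gap, and you have correctly identified it yourself: the Fefferman/Phong--Stein asymptotics for $K_D$ near a strictly pseudoconvex boundary point are proved under a global pseudoconvexity (or at least finite-type) hypothesis, because the standard route goes through Kohn's subelliptic estimates for the $\bar\partial$-Neumann problem, which simply need not hold on a general smoothly bounded domain. A localization principle of the sort you sketch---transplanting the asymptotics from a locally matching strictly pseudoconvex model---does not follow from Kohn's local estimates alone: those control the \emph{Neumann operator}, not the Bergman projection directly, and the passage from one to the other (via $B_D = I - \bar\partial^* N \bar\partial$) is a global identity. There is work in this direction (local Bergman kernel asymptotics at strictly pseudoconvex points of otherwise arbitrary domains), but it is substantially harder than anything needed here and is certainly not ``the technical heart of the author's argument.''

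In fact the paper's proof is \emph{completely different} and avoids kernel asymptotics altogether. It runs through soft functional analysis: if $B_D$ were $L^1$-bounded, then for each $x\in\overline D$ one could build, as a weak-$\star$ limit of $\Pi(g_U)\mu$ with $g_U$ an approximate identity at $x$, a complex measure $\sigma_x$ on $\overline D$ representing evaluation at $x$ on $A(D)=C(\overline D)\cap\mathcal O(D)$. Your ball-tangency observation is used, but only to produce a \emph{peaking function} $F\in A(D)$ at the tangency point $z_0$ (namely $F(z)=R^{-2}\langle z-z_1,z_0-z_1\rangle$); feeding $F^N$ into the representing identity and letting $N\to\infty$ forces $\sigma_{z_0}\{z_0\}=1$. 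The contradiction comes from a short lemma showing that any weak-$\star$ limit of holomorphic $L^1$ densities on a smoothly bounded domain is \emph{nonatomic}: near $z_0$ one picks a $(0,1)$ vector field tangent to $bD$, integrates by parts, and sees that the boundary part of the limiting measure solves a first-order PDE, hence cannot carry a point mass. No estimate on $K_D$ is ever written down. What this buys is extreme generality (the same machine handles Szeg\H{o} projections and projections attached to arbitrary involutive structures); what your approach would buy, if completed, is quantitative information such as the $\log(1/d_k)$ blowup---but at the cost of the hard localization step you flagged.
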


In Section \ref{bergman_sec} the reader can find a more general $L^1$ unboundedness statement for Bergman projections on precompact domains in complex manifolds satisfying appropriate assumptions, from which Theorem \ref{intr_bergman_thm} follows easily. 

It is natural to ask whether the same result holds on \emph{every bounded domain in $\C^n$}. We were not able to answer this question (even if the hypothesis of Theorem \ref{intr_bergman_thm} can be relaxed a bit, see Section \ref{bergman_sec}). Notice that both the boundedness assumption and some restriction on the ambient manifold are necessary, because Bergman spaces on unbounded domains can be trivial (e.g., when $D=\C^n$, by the $L^2$ version of Liouville theorem), and Bergman spaces on compact manifolds contain only locally constant functions, and in both cases the corresponding Bergman projections are trivially $L^1$ bounded.

\subsection{The $L^1$ (un)boundedness problem for Szeg\"o projections}

The second class of operators we want to consider are Szeg\"o projections, which may be defined in great generality as follows.

\begin{dfn}[Szeg\"o projections on abstract CR manifolds]\label{szego_ex} Let $X$ be a CR manifold of hypersurface type equipped with a smooth positive measure $\mu$. The corresponding Szeg\"o projection, denoted by $\mathcal{S}_{X,\mu}$, is the orthogonal projection on $L^2(X,\mu)$ with range the space of $\mu$-square-integrable CR functions (see Section \ref{involutive_sec} and the references there for a more detailed discussion of this definition). \end{dfn}

As in the case of Bergman projections, the $L^1$ unboundedness of a variety of Szeg\"o projections has been established as a corollary of a much deeper analysis of the relevant Szeg\"o kernels (e.g., when $X$ is the boundary of a pseudoconvex domain of one of the kinds we have seen when discussing Bergman projections \cite{phong_stein,nagel_rosay_stein_wainger, mcneal_stein, charpentier_dupain, lanzani_stein_szego}, but see also \cite{christ} for a class of abstract pseudoconvex compact CR manifolds of dimension 3). 

We have an analogue of Theorem 	\ref{intr_bergman_thm} for Szeg\"o projections. 

\begin{thm}\label{intr_szego_thm}
If $X$ is the boundary of a bounded and smoothly bounded domain $D\subset\C^n$, equipped with the induced CR structure and an arbitrary smooth positive measure $\mu$, then the corresponding Szeg\"o projection $\mathcal{S}_{X,\mu}$ is unbounded on $L^1$. 
\end{thm}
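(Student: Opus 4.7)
The plan is to mirror the strategy of Theorem~\ref{intr_bergman_thm}, reducing the global question to a local statement near a strongly pseudoconvex point of $\partial D$. By the symmetry of $\mathcal{I}(\mathcal{S}_{X,\mu})$ about $1/2$ recalled in the introduction, $L^1$ boundedness is equivalent to $L^\infty$ boundedness, so it will suffice to exhibit either a function in $L^\infty(X,\mu)\cap L^2(X,\mu)$ with unbounded Szeg\"o projection, or to show that the Szeg\"o kernel, as a function of one variable with the other fixed, fails to lie in $L^1$. I also note that two different smooth positive measures produce Szeg\"o projections that differ by conjugation with multiplication by smooth positive functions, so $L^1$ boundedness does not depend on the choice of $\mu$, and I may fix a convenient one.

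The first substantive step is to locate a strongly pseudoconvex point $p\in X$. Such a point is automatically present in any bounded smoothly bounded domain: fixing $z_0\in D$ and choosing $p\in\partial D$ that maximizes $z\mapsto |z-z_0|$, the Euclidean sphere of radius $|p-z_0|$ centered at $z_0$ contains $\overline{D}$ and is externally tangent to $\partial D$ at $p$, which forces the Levi form of $\partial D$ at $p$ to dominate the (positive definite) Levi form of the sphere. This is the only place where boundedness of $D$ is used, and crucially, no global pseudoconvexity assumption enters.

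Next, I would argue that the Szeg\"o projection is, modulo a smoothing operator trivially bounded on $L^1$, a local object concentrated near the diagonal. Pseudolocality of $\mathcal{S}_{X,\mu}$ (its kernel is smooth off the diagonal) lets one compare it, in a CR chart around $p$, with the Szeg\"o projection on a model. In a neighborhood of the strongly pseudoconvex $p$, the CR structure on $X$ is, to principal order, the flat CR structure of the Heisenberg group $\mathbb{H}^n$ (equivalently, of the boundary of the Siegel upper half space), for which the Szeg\"o kernel is explicit and is classically known to fail the integrability needed for $L^1$ boundedness. Transferring this local failure back to $X$, via the stability of $L^1$ unboundedness under the bounded (smooth, off-diagonal) perturbations introduced by cut-offs and by the CR change of coordinates, yields the theorem.

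The main technical obstacle is making this local-to-global transfer rigorous without pseudoconvexity hypotheses. On a strongly pseudoconvex domain the Boutet de Monvel--Sj\"ostrand asymptotics describe the full Szeg\"o kernel uniformly along the boundary, but under our weaker assumptions the kernel may be badly behaved far from $p$, so the comparison with the Heisenberg model is only valid in a fixed neighborhood; controlling the off-diagonal remainder and ensuring it contributes an $L^1$-bounded operator is the delicate part. I expect this to require the abstract framework developed in Section~\ref{involutive_sec}, which seems designed precisely to separate the local \emph{principal} behavior of projections like $\mathcal{S}_{X,\mu}$ from the global, structure-dependent remainder, so that local non-integrability of the kernel at a single strongly pseudoconvex point is enough to obstruct global $L^1$ boundedness.
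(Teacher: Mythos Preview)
Your proposal has a genuine gap at precisely the point you flag as ``the main technical obstacle.'' You assume pseudolocality of $\mathcal{S}_{X,\mu}$ (smoothness of the kernel off the diagonal) and a local comparison with the Heisenberg model near a strongly pseudoconvex point. Both of these are deep facts that are known only under global hypotheses such as strong pseudoconvexity or finite type; for a merely bounded, smoothly bounded domain with no pseudoconvexity assumption, neither the Boutet de Monvel--Sj\"ostrand parametrix nor any off-diagonal smoothing statement is available. You then hope that Section~\ref{involutive_sec} ``separates the local principal behavior \dots\ from the global, structure-dependent remainder,'' but that is a misreading: Section~\ref{involutive_sec} contains no microlocal analysis at all, and nothing in the paper provides any control on the Szeg\"o kernel. (Your side remark that changing $\mu$ amounts to conjugating by a multiplication operator is also incorrect: orthogonal projection onto a fixed subspace with respect to two different inner products is not related in this simple way.)

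The paper's proof avoids kernels entirely. You did locate the right geometric input: the ball-tangency argument produces a point $z_0\in bD$ and a holomorphic function $F(z)=R^{-2}\langle z-z_1,z_0-z_1\rangle$ on $\overline D$ peaking at $z_0$; its restriction to $X=bD$ is a $C^1$ CR function peaking at $x_0:=z_0$. From here the argument is purely functional-analytic (Theorem~\ref{abstract_2_thm}, via the second part of Theorem~\ref{settingA_thm}): if $\mathcal{S}_{X,\mu}$ were $L^1$ bounded, Lemma~\ref{representing_meas_lem} would furnish, for $x_0$, a measure $\sigma_{x_0}$ in the weak-$\star$ closure of $\mathrm{CR}^2(X,\mu)$ representing evaluation at $x_0$ on continuous CR functions. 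Applying this to $F|_X^{\,N}$ and letting $N\to\infty$ forces $\sigma_{x_0}\{x_0\}=1$. But $\sigma_{x_0}$ is a CR measure, and a complex measure annihilated by a nonvanishing first-order vector field can have no atoms (this is the elementary fact invoked in the proof of Theorem~\ref{settingA_thm}). This contradiction gives the theorem with no kernel estimate, no pseudolocality, and no microlocal comparison.
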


Both Theorem \ref{intr_bergman_thm} and Theorem \ref{intr_szego_thm} are probably unsurprising, and their proofs are indeed quite simple. However, things get more interesting if one considers the $L^1$ (un)boundedness problem for abstract CR manifolds. 

In fact, in the generality of Definition \ref{szego_ex}, one cannot expect to always have $1\notin\mathcal{I}(\mathcal{S}_{X,\mu})$. A first reason is the existence of CR structures not admitting any nonconstant $L^2$ CR function, as in the following example (but see also \cite{cordaro}). 

\begin{ex}[Barrett tori, \cite{barrett}]\label{barrett_ex} Let $X=\mathbb{T}^3:=(\R/(2\pi \Z))^3$ be the three-dimensional torus, endowed with the CR structure generated by \[
	L=\alpha(x_3)\frac{\partial}{\partial x_1}+\beta(x_3)\frac{\partial}{\partial x_2}+\frac{\partial}{\partial x_3},
	\] 
	where $(x_1,x_2,x_3)$ are angular coordinates and $\alpha, \beta\in C^\infty(\mathbb{T},\C)$. For $L$ to define a genuine CR structure we need $L$ and $\overline{L}$ to be $\C$-linearly independent, that is, $\Im(\alpha)$ and $\Im(\beta)$ to have no common zeros. The existence of a nonconstant $L^2$ CR function with respect to the above structure amounts to the existence of a nonzero $(m_1,m_2)\in \Z^2$ such that
	\[
	m_1\int_{\mathbb{T}}\alpha(x_3)dx_3+m_2\int_{\mathbb{T}}\beta(x_3)dx_3\in 2\pi \Z.
	\]
To verify this claim, one may apply $\overline{L}$ term by term to the Fourier expansion in $(x_1,x_2)$ of an arbitrary $f\in L^2(\mathbb{T}^3)$ (for details see \cite[p. 890]{barrett}). It is clear that for generic smooth, or even real analytic, $\alpha, \beta$ the above condition fails for every nonzero $(m_1,m_2)\in\Z^2$, that is, there are no nonconstant $L^2$ CR functions on the corresponding Barrett torus.\end{ex}
CR manifolds not admitting nonconstant $L^2$ CR functions do not exhaust the counterexamples to a hypothetical $L^1$ unboundedness result for general Szeg\"o projections, as the next example shows. 

\begin{ex}[Levi-flat CR structures]\label{levi_flat_ex} Let $X:=\mathbb{T}\times Y$, where $Y$ is any compact connected complex manifold, equipped with a smooth positive measure $\mu$ and the CR structure induced by the complex structure on $Y$ (i.e., the CR structure generated by $\frac{\partial}{\partial z_j}$, where $z_1,\ldots,z_n$ are local holomorphic coordinates on $Y$). A function $f$ is CR with respect to this structure if and only if $f(\theta, \cdot)$ is holomorphic, that is, constant, for every $\theta\in \mathbb{T}$. If we assume for simplicity that $\mu$ is the product of a smooth measure on $\mathbb{T}$ and a probability measure $\nu$ on $Y$, it is easy to see that \[
	\mathcal{S}_{X,\mu}f(\theta, z):=\int_Yf(\theta, \cdot)d\nu\qquad f\in L^2(X, \mu),
	\]
which is trivially bounded on $L^1(X,\mu)$.

The CR manifolds just described are very special instances of Levi flat CR structures (see Section \ref{exceptional_sec} and the references there for the definition). With a bit more effort, one may prove that Szeg\"o projections are $L^1$ bounded on compact Levi flat CR manifolds such that every leaf of the Levi foliation is compact. We omit the details.
\end{ex}

We are able to prove that, in the compact real analytic category, lack of nonconstant CR functions (Example \ref{barrett_ex}) and Levi-flatness (Example \ref{levi_flat_ex}) are essentially the only two sources of $L^1$ boundedness of Szeg\"o projections. The precise statement is as follows.

\begin{thm}\label{intr_CR_thm}
Let $X$ be a compact, connected, and real analytic CR manifold of hypersurface type, equipped with a smooth positive measure $\mu$. Then one of the following three conditions must hold: 
\begin{enumerate}
	\item[(a)] the Szeg\"o projection does not admit a bounded extension to $L^1(X,\mu)$;
	\item[(b)] $X$ admits no nonconstant real-analytic CR functions; 
	\item[(c)] $X$ is Levi-flat and every leaf of the Levi foliation is compact. 
\end{enumerate}
\end{thm}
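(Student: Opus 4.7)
I argue by contraposition: assuming that $\mathcal{S}_{X,\mu}$ extends boundedly to $L^1(X,\mu)$ and that $X$ admits a nonconstant real-analytic CR function $f$, I aim to force $X$ to be Levi-flat with every leaf compact. By self-adjointness and interpolation, $L^1$-boundedness of $\mathcal{S}$ is equivalent to boundedness on every $L^p$ with $p\in[1,\infty]$ and implies the Schur-type estimate
\[
\sup_{y\in X}\int_X|K(x,y)|\,d\mu(x)<\infty,
\]
where $K$ denotes the Szeg\"o kernel. Approximating the point mass at $y_0\in X$ by a smooth family and passing to the limit, this estimate is equivalent to the uniform bound $K(\cdot,y_0)\in L^1(X,\mu)$ for every $y_0$. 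The proof thus reduces to locating, under the failure of (c), a single $y_0$ for which $K(\cdot,y_0)\notin L^1$.

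The argument naturally splits into two cases. First, suppose $X$ is not Levi-flat; by real-analyticity, the Levi form is nonzero on a dense open set, so I pick $x_0$ at which it has positive rank. A neighborhood of $x_0$ is then CR-equivalent to an open piece of a real-analytic real hypersurface $M\subset\C^n$ whose Levi form is non-degenerate in at least one direction at the image of $x_0$. The classical oscillatory-integral description of the local Szeg\"o kernel (Boutet de Monvel-Sj\"ostrand, and the Nagel-Rosay-Stein-Wainger-style refinements for mixed signature) then yields a precise non-integrable principal singularity near the diagonal. I would construct a local parametrix for the global $\mathcal{S}$ using this model and, invoking the Baouendi-Tr\`eves approximation theorem in the real-analytic category to align the local and global CR projections modulo a bounded remainder, conclude that $K(\cdot,x_0)\notin L^1(X,\mu)$. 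Second, suppose $X$ is Levi-flat but has a non-compact Levi leaf $L$; since $f$ is constant along each leaf and, by continuity, along its closure, and since $f$ is nonconstant, $\overline L$ is a proper subset of $X$. Here I would use the averaging representation of $\mathcal{S}$ on Levi-flat manifolds (compare Example \ref{levi_flat_ex}) together with a structure theorem for the real-analytic leaf space to build $L^1$-normalized test functions supported on progressively long pieces of $L$ whose Szeg\"o projections fail to stay bounded in $L^1$.

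The main obstacle is the local-to-global comparison in the non-Levi-flat case: while the non-integrable singularity of the Szeg\"o kernel of a real-analytic CR hypersurface in $\C^n$ is well-understood, transferring it to the kernel of the abstract projection $\mathcal{S}_{X,\mu}$ requires a parametrix compatible with the auxiliary measure $\mu$, which need have no canonical relation to the natural measures arising from the local CR embedding. Real-analyticity is essential for extending the local CR structure into an ambient complex neighborhood and matching reproducing kernels to principal order; without it, one would risk absorbing the local singularity into an error term. In the Levi-flat case the difficulty is rather of foliation-theoretic nature, demanding an explicit description of the -- possibly non-Hausdorff -- leaf-space quotient in order to turn non-compactness of a leaf into an explicit $L^1$ obstruction, exploiting real-analyticity via e.g.\ Rea's theorem on real-analytic Levi foliations.
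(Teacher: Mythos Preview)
Your strategy has a genuine gap at precisely the point you flag as ``the main obstacle,'' and the gap is not merely technical. The Boutet de Monvel--Sj\"ostrand description concerns the \emph{local} Szeg\"o projector, i.e., the orthogonal projection onto boundary values of holomorphic functions on a small piece of a hypersurface in $\C^n$. The operator $\mathcal{S}_{X,\mu}$ in the theorem projects onto the space of \emph{global} $L^2$ CR functions on an abstract $X$, a space that may be extremely small even when the local structure is rich. There is no mechanism---Baouendi--Tr\`eves or otherwise---that forces the Schwartz kernel of the global projector to inherit the diagonal singularity of a local model; the difference between the two projectors is not a ``bounded remainder'' in any sense you have established. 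A related issue occurs earlier: from $L^1$-boundedness one obtains (as in the paper's Lemma~\ref{representing_meas_lem}) a \emph{measure} $\sigma_{y_0}$ representing evaluation at $y_0$, not an $L^1$ function $K(\cdot,y_0)$. On an abstract CR manifold, $L^2$ CR functions need not be continuous, so point evaluations are not bounded on $\mathrm{CR}^2(X,\mu)$ and there is no reproducing kernel in the pointwise sense you use.

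The paper's argument avoids kernel asymptotics entirely. It starts from the same representing measures $\sigma_x$, but then exploits the \emph{algebra} structure of CR functions: composing the given nonconstant real-analytic CR function $f$ with peak functions produced by Bishop's and Mergelyan's theorems, one forces each $\sigma_x$ to assign mass~$1$ to a level set of $f$. A wave-front-set lemma (Lemma~\ref{key_lem}) then shows that any CR measure charging a smooth hypersurface forces that hypersurface to be characteristic, and real-analytic Sard plus \L ojasiewicz stratification propagate this to all of $X$, yielding that $\Re f$ is itself CR. From this ``exceptional'' property one reads off Levi-flatness and, via Inaba's theorem, compactness of the leaves. The argument never looks at the Szeg\"o kernel and never invokes local embeddability or parametrix constructions.
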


A few remarks may be of help in clarifying the theorem. \begin{enumerate}
\item By real analytic CR manifold, we mean that $X$ has the structure of real analytic manifold and that the CR bundle is locally generated by real analytic sections. 
\item The core of the proof consists in showing that if a CR manifold as in the statement admits at least a nonconstant real analytic CR function and its Szeg\"o projection is bounded on $L^1$, then it must be Levi flat, thus providing an unexpected implication between a functional analytic property and a rather strong geometric one. 
\item This core implication is achieved appealing to an argument based on abstract functional analysis and the theory of wave front sets, valid in much greater generality, combined with a more refined analysis exploiting real analyticity. It would be interesting to know whether Theorem \ref{intr_CR_thm} holds in the smooth category. 
\item One may wonder whether condition (b) may be replaced by the stronger \begin{enumerate}
	\item[\emph{(b')}] \emph{$X$ admits no nonconstant $L^2$ CR function.}
	\end{enumerate}
This seems to be a difficult question, related to the hard problem of deciding if any $L^2$ CR function on $X$ can be globally $L^2$ approximated by real analytic CR functions (local approximation properties are better understood, see \cite[Chapter II]{ber_cord_hou}). We observe that conditions \emph{(b)} and \emph{(b')} are equivalent for Barrett tori (as may be easily deduced from our discussion in Example \ref{barrett_ex} or the one in the paper \cite{barrett} itself) and for the CR structures considered in \cite{cordaro} (we thank P. Cordaro for this observation). \end{enumerate}

\subsection{General overview of the paper}\label{overview_sec}

Our analysis is based on the observation that both Bergman and Szeg\"o projections may be embedded in the large family of $L^2$ projections onto spaces of solutions of involutive structures, as described in Section \ref{settings_sec}. The key property shared by any such projection $\Pi:L^2(X,\mu)\longrightarrow H$ is that the range $H$ consists of $L^2$ solutions of a system of first order homogeneous PDEs, and therefore it is essentially closed under pointwise multiplication (by Leibniz rule). While this cannot be literally true, because products of $L^2$ functions need not be $L^2$ and, more subtly, products of nonsmooth solutions need not be solutions without further assumptions (see, e.g., \cite[Corollary II.3.4]{ber_cord_hou}), it is enough for our purposes to notice that $H$ contains the subalgebra $A$ consisting of smooth solutions. This leads us to consider an even more general class of $L^2$ projections equipped with a subalgebra $A$ of their range $H$. This is the Abstract Setting discussed in Section \ref{gen_setting_sec}. The sequence of progressive generalizations we just described is subsumed in the following scheme: \begin{center}
	\begin{tikzcd}
		\boxed{\textrm{Bergman } \& \textrm{ Szeg\"o Projections} \arrow[d, Rightarrow]} \\
		\boxed{\textrm{Projections associated to Involutive Structures} \arrow[d, Rightarrow]} \\
		\boxed{\textrm{Abstract Setting}}
		\end{tikzcd}
	\end{center}

In the following we proceed logically, first defining the Abstract Setting in Section \ref{gen_setting_sec}, and then proving a general $L^1$ unboundedness result in Section \ref{abstract_thm_sec} (Theorem \ref{abstract_thm}). Its proof relies in a crucial way on Bishop's theorem about function algebras of holomorphic functions of one complex variable. We also show how to apply this general result to invariant projections on compact connected abelian groups (Theorem \ref{invariant_thm}), revealing a connection with the notion of Riesz set.

Next, in Section \ref{involutive_sec}, we introduce the class of $L^2$ projections associated to involutive structures on manifolds, including as special cases Bergman and Szeg\"o projections, and we deduce Theorem \ref{intr_bergman_thm} and Theorem \ref{intr_szego_thm} from the general results of Section \ref{abstract_thm_sec}. The reader interested in these theorems should keep in mind that their proofs rely on a more elementary version of the general $L^1$ unboundedness theorem valid in the Abstract Setting (namely, Theorem \ref{abstract_2_thm}). 

Finally, Section \ref{wf_sec} contains two lemmas about wave front sets needed in the proof of Theorem \ref{intr_CR_thm}, which occupies Section \ref{szego_sec}. 

Let us conclude by observing that, while we limited our discussion to the compact and scalar-valued setting, it should be possible to adapt our methods to more general situations, where the underlying spaces are allowed to be noncompact and the $L^2$ spaces consist of sections of a vector bundle. 

\subsection{Acknowledgements} The work presented in this paper was initiated while the author was a postdoc at the University of Vienna, it was developed while he was a Marie Curie Fellow at the University of Birmingham, and it was completed while he was Ricercatore Indam at Scuola Normale Superiore of Pisa.  This research was funded by the FWF-project P28154, by the European Union’s Horizon 2020 Research and Innovation Programme under the Marie Sk\l odowska-Curie Grant Agreement No. 841094, and by Istituto Nazionale di Alta Matematica "F. Severi". 

The author would like to thank Bernhard Lamel (University of Vienna) for various inspiring conversations on Szeg\"o projections on abstract CR manifolds, Paulo D. Cordaro (University of S\~{a}o Paulo) for offering his thoughts about CR manifolds not admitting nonconstant CR functions, and Fulvio Ricci (Scuola Normale Superiore, Pisa) for his comments on various aspects of this work. 

\section{The Abstract Setting}\label{gen_setting_sec}

Let $X$ be a \emph{compact connected Hausdorff} topological space, equipped with a \emph{finite Borel} measure $\mu$, which we assume to be \emph{regular} and of \emph{full support}, i.e., $\mu(V)>0$ for every nonempty open set $V\subseteq X$. The regularity assumption is automatically satisfied if $X$ has the additional property that each of its open sets is $\sigma$-compact, that is, it is a countable union of compact sets (see \cite[Theorem 2.18]{rudin}). 

Let $C(X)$ be the Banach algebra of continuous complex-valued functions on $X$, normed by the supremum norm, and let $\mathcal{M}(X)$ be the space of regular complex Borel measures on $X$, normed by total variation. By the Riesz Representation Theorem, $\mathcal{M}(X)$ is isometrically isomorphic to the dual of $C(X)$ via the pairing $(\mu, f)\mapsto \int_Xfd\mu$, and may be equipped with the corresponding weak-$\star$ topology. 

Since $\mu$ is finite, we have the following chain of continuous inclusions:\begin{equation}\label{inclusions}
C(X)\hookrightarrow L^2(X,\mu)\hookrightarrow L^1(X,\mu)\hookrightarrow \mathcal{M}(X),
\end{equation}
where the last one is the isometry that identifies $f\in L^1(X,\mu)$ with the $\mu$-absolutely continuous complex measure with density $f$.

Let $H$ be a \emph{closed linear subspace} of the Hilbert space $L^2(X,\mu)$, and let \[\Pi:L^2(X,\mu)\rightarrow H\] be the associated orthogonal projection. 

In the sequel an important rôle is played by $\mathcal{M}_0\subseteq\mathcal{M}(X)$, the \emph{weak-$\star$ closure} of $H$, when the latter is thought of as a subspace of $\mathcal{M}(X)$ as in \eqref{inclusions}. 

Finally, we assume to have a \emph{subalgebra with unit} \[A\subseteq C(X)\cap H,\] that is, a linear subspace of $C(X)\cap H$ containing the constant functions and closed under pointwise multiplication. Notice that we are not assuming that $A$ be closed in the sup-norm topology of $C(X)$. For the reader's convenience, we summarize the situation with a diagram.

\begin{center}
\begin{tikzcd}
	&L^2(X,\mu)\arrow{d}{\Pi}&\mathcal{M}(X)\\
	A\arrow[hook]{r}{}&H\arrow[hook]{r}{}&\mathcal{M}_0\arrow[hook]{u}{}
	\end{tikzcd}
\end{center}

The unlabelled arrows are the obvious inclusions.

\section{A general $L^1$ unboundedness result}\label{abstract_thm_sec}

Let $X$, $\mu$, $H$, $\Pi$, $\mathcal{M}_0$, and $A$ be as in Section \ref{gen_setting_sec}. We recall that we are interested in the following question: \emph{can $\Pi$ be extended to a bounded linear operator on $L^1(X,\mu)$?} 

Observe that, since $L^2(X,\mu)$ is dense in $L^1(X,\mu)$, $\Pi$ admits such an extension if and only if the following a priori inequality holds for some $C<+\infty$: \begin{equation}
	||\Pi(f)||_{L^1(X,\mu)}\leq C||f||_{L^1(X,\mu)}\qquad\forall f\in L^2(X,\mu).\label{L1_bound}
\end{equation}	

A general negative result is provided by the following theorem. 

\begin{thm}\label{abstract_thm}
Assume that: 
	\begin{itemize}
		\item[i)] $A$ is a nontrivial subalgebra, that is, there exists a nonconstant function $f_0\in A$;
		\item[ii)] every measure in $\mathcal{M}_0$ is absolutely continuous with respect to $\mu$.
	\end{itemize}	
	Then $\Pi$ cannot be extended to a bounded linear operator on $L^1(X,\mu)$. 
\end{thm}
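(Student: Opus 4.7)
The plan is to argue by contradiction. If $\Pi$ extends to a bounded operator on $L^1(X,\mu)$ with norm $C$, then since $\Pi$ is self-adjoint on $L^2$ and $\mu$ is finite (so $L^\infty\subseteq L^2$), a standard dualization gives boundedness of $\Pi$ on $L^\infty$ with the same norm. My strategy is to extract, at a cleverly chosen $x_0\in X$, an $L^1(\mu)$ ``reproducing kernel'' $K(\cdot,x_0)$, and then to violate the resulting reproducing identity using a peak function supplied by Bishop's theorem.

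First I construct the kernel. Fix $x_0\in X$ and choose a net $(g_\iota)\subseteq C(X)$ with $\|g_\iota\|_{L^1(\mu)}=1$ and $g_\iota\,d\mu\to\delta_{x_0}$ weak-$\star$ in $\mathcal{M}(X)$, which is possible because $\mu$ is regular and of full support. By $L^1$-boundedness, $\|\Pi g_\iota\|_{L^1}\le C$, so after passing to a subnet, $\Pi g_\iota\,d\mu\to\sigma_{x_0}$ weak-$\star$ in $\mathcal{M}(X)$; as each $\Pi g_\iota\in H$, the limit $\sigma_{x_0}$ lies in $\mathcal{M}_0$, and by hypothesis (ii), $\sigma_{x_0}=K(\cdot,x_0)\,d\mu$ with $K(\cdot,x_0)\in L^1(\mu)$. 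For any $\phi\in C(X)$ with $\bar\phi\in H$, the $L^2$-self-adjointness of $\Pi$ gives
\[
\int\phi\,\Pi g_\iota\,d\mu=\int g_\iota\,\overline{\Pi\bar\phi}\,d\mu=\int g_\iota\phi\,d\mu,
\]
the second equality using $\Pi\bar\phi=\bar\phi$; the limit of the right-hand side is $\phi(x_0)$ by the weak-$\star$ convergence of $g_\iota\,d\mu$, so I obtain the reproducing identity $\int\phi\,K(\cdot,x_0)\,d\mu=\phi(x_0)$.

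Next, I invoke Bishop's theorem. Put $K:=f_0(X)\subseteq\C$ and $\nu:=(f_0)_\ast\mu$; since $X$ is connected and $f_0$ is nonconstant, $K$ is a nontrivial compact connected subset of $\C$ and $\nu$ has full support. As $\C[f_0]\subseteq A\subseteq H$ and $H$ is closed in $L^2$, composition with $f_0$ sends the uniform polynomial algebra $P(K)$ into $H$. Bishop's theorem --- that, for a uniform algebra on a metrizable compact space, the Choquet boundary coincides with the set of peak points --- together with the fact that the Choquet boundary of $P(K)$ lies in the uncountable continuum $\partial K$, exhibits uncountably many peak points of $P(K)$; since $\nu$ admits at most countably many atoms, I may pick a peak point $w_0\in K$ with $\nu(\{w_0\})=0$, so $\mu(f_0^{-1}(w_0))=0$, and a peak function $\alpha\in P(K)$ with $\alpha(w_0)=1$ and $|\alpha(w)|<1$ for $w\in K\setminus\{w_0\}$. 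Choosing any $x_0\in f_0^{-1}(w_0)$ and applying the reproducing identity to $\phi_n:=\overline{\alpha(f_0)^n}\in\bar{A}\cap C(X)$ (whose conjugates $\alpha(f_0)^n$ lie in $H$ as uniform limits of elements of $A$), one finds $\int\phi_n K(\cdot,x_0)\,d\mu=\phi_n(x_0)=1$ for every $n\ge 1$. On the other hand, $|\phi_n|=|\alpha\circ f_0|^n\le 1$ and $\phi_n\to 0$ pointwise on $X\setminus f_0^{-1}(w_0)$, a set of full $\mu$-measure; dominated convergence with the $L^1$-dominant $|K(\cdot,x_0)|$ forces $\int\phi_n K(\cdot,x_0)\,d\mu\to 0$, contradicting the constant value $1$.

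The main obstacle is the kernel construction in the second paragraph, which is also the point where the two hypotheses enter in an essential, non-circular way: the assumed $L^1$-boundedness of $\Pi$ produces the $\mathcal{M}(X)$-compactness needed for the images of approximations to $\delta_{x_0}$, while hypothesis (ii) forces the resulting weak-$\star$ limit to be absolutely continuous with respect to $\mu$. Bishop's theorem is used only, but decisively, in the quasi-elementary step of selecting a peak point of $P(K)$ outside the (at most countable) set of atoms of $\nu$, which in turn enables the final dominated-convergence step.
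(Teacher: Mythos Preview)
Your argument is essentially the paper's: both construct, from the assumed $L^1$ bound and Banach--Alaoglu, a representing measure $\sigma_{x_0}\in\mathcal{M}_0$ at each point, then use a peak function coming from Bishop's theorem together with dominated convergence to force $\sigma_{x_0}$ to concentrate on a fiber $f_0^{-1}\{w_0\}$, and finally derive a contradiction from countability of atoms versus uncountability of peak points. The only cosmetic difference is that the paper shows \emph{every} $z\in b_{\mathrm{out}}K$ yields $\mu(f_0^{-1}\{z\})>0$ (too many disjoint sets of positive measure), while you pick a single peak point whose fiber is $\mu$-null and contradict the reproducing identity directly; these are dual versions of the same counting argument.

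There is one genuine slip. The sentence ``the Choquet boundary of $P(K)$ lies in the uncountable continuum $\partial K$, [which] exhibits uncountably many peak points'' does not do what you need: a subset of an uncountable set can be countable. What you actually require is that the set of peak points of $P(K)$ is uncountable. The paper secures this by combining Bishop's theorem (every outer boundary point of a compact set with connected complement is a peak point for $A(K)$) with Mergelyan's theorem (so these peak functions lie in $P(K)$), concluding that \emph{all} of $b_{\mathrm{out}}K$ consists of peak points for $P(K)$; since $K$ is a nondegenerate continuum, $b_{\mathrm{out}}K$ is uncountable. Alternatively, you could argue that the Choquet boundary is a dense $G_\delta$ in the Shilov boundary $b_{\mathrm{out}}K$, which is perfect, and invoke Baire category. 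Either way, a line of justification is missing. (Also, your opening remark about $L^\infty$ boundedness via duality is never used and can be dropped.)
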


The rest of this section is organized as follows: Section \ref{representing_sec} and Section \ref{bishop_mergelyan_sec} discuss various preliminaries, Section \ref{abstract_thm_proof_sec} contains the proof of Theorem \ref{abstract_thm}, Section \ref{variant_sec} presents a simpler variant of this result, and Section \ref{invariant_proj_sec} contains a first application to invariant projections on compact connected abelian groups.

\subsection{$L^1$ boundedness and representing measures}\label{representing_sec}

We show that if $\Pi$ admits a bounded extension to $L^1(X,\mu)$, then evaluation functionals on $A$ are represented by conjugates of measures in $\mathcal{M}_0$. 

\begin{lem}\label{representing_meas_lem}
	If $\Pi$ admits a bounded extension to $L^1(X,\mu)$, then for every $x\in X$ there exists $\sigma_x\in \mathcal{M}_0$ such that \begin{equation}
		\int_X fd\overline{\sigma_x} = f(x) \qquad \forall f\in C(X)\cap H.\label{representing_measure}
	\end{equation}
\end{lem}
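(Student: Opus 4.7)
The plan is to realize $\sigma_x$ as a weak-$\star$ cluster point of the net $\Pi(\psi_U)$, where $(\psi_U)$ is a continuous approximate identity concentrated at $x$. I fix $x\in X$ and use normality of the compact Hausdorff space $X$ together with Urysohn's lemma: for each open neighborhood $U$ of $x$, choose $\phi_U\in C(X)$ with $0\leq \phi_U\leq 1$, $\phi_U(x)=1$, and $\operatorname{supp}(\phi_U)\subset U$. Since $\mu$ has full support, $\int_X \phi_U\,d\mu>0$, so $\psi_U := \phi_U/\|\phi_U\|_{L^1(\mu)}$ is well defined. Indexing by the neighborhood filter at $x$, directed by reverse inclusion, these form a net with the property
\[
\int_X g\,\psi_U\,d\mu \longrightarrow g(x) \qquad \forall g\in C(X),
\]
by continuity of $g$ at $x$.

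Next, the $L^1$ boundedness of $\Pi$ gives $\|\Pi\psi_U\|_{L^1(\mu)}\leq C$ uniformly in $U$. Viewing each $\Pi\psi_U$ as a measure in $\mathcal{M}(X)$ via \eqref{inclusions}, the total variations are uniformly bounded, so Banach--Alaoglu yields a subnet converging weak-$\star$ to some $\sigma_x\in\mathcal{M}(X)$. Because each $\Pi\psi_U$ belongs to $H$ and $\mathcal{M}_0$ is by definition the weak-$\star$ closure of $H$ in $\mathcal{M}(X)$, the limit $\sigma_x$ lies in $\mathcal{M}_0$.

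For $f\in C(X)\cap H$, I would invoke $\Pi f=f$ together with the self-adjointness of $\Pi$ on $L^2(X,\mu)$ to write
\[
\int_X (\Pi\psi_U)\,\overline{f}\,d\mu \;=\; \int_X \psi_U\,\overline{\Pi f}\,d\mu \;=\; \int_X \psi_U\,\overline{f}\,d\mu.
\]
The right-hand side tends to $\overline{f(x)}$ by the approximate-identity property applied to $\overline{f}\in C(X)$, while the left-hand side tends, along the chosen subnet, to $\int_X \overline{f}\,d\sigma_x$ by weak-$\star$ convergence of measures. Taking complex conjugates and recalling that $\int_X f\,d\overline{\sigma_x}:=\overline{\int_X\overline{f}\,d\sigma_x}$ yields \eqref{representing_measure}.

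I do not anticipate a serious technical obstacle. The one conceptual point to keep in mind is that $L^1(X,\mu)$ is not a dual space, so one cannot extract weak limits of $(\Pi\psi_U)$ directly inside $L^1$; this is precisely why the statement is phrased in terms of $\mathcal{M}_0\subset\mathcal{M}(X)$, where Banach--Alaoglu supplies the required compactness. The remainder is a standard self-adjointness manipulation.
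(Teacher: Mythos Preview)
Your proof is correct and follows essentially the same route as the paper: construct an $L^1$-normalized continuous approximate identity $(\psi_U)$ via Urysohn's lemma, use the $L^1$ bound and Banach--Alaoglu to extract a weak-$\star$ limit $\sigma_x\in\mathcal{M}_0$ of $(\Pi\psi_U)$, and identify the limit via the self-adjointness relation $\langle \Pi\psi_U,f\rangle=\langle \psi_U,f\rangle$ for $f\in C(X)\cap H$. The only cosmetic difference is that the paper takes $\psi_U$ real (so the conjugation is implicit) and reverses the order of the last two steps.
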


\begin{proof}[Proof of Lemma \ref{representing_meas_lem}]
	Given an open neighborhood $U$ of $x$, choose a nonnegative continuous function $g_U$ supported on $U$ such that $\int g_Ud\mu=1$. The existence of such a function follows from Urysohn's Lemma and the fact that $\mu(U)>0$. Then \[
	\lim_{U\rightarrow x}\int fg_Ud\mu = f(x) \qquad \forall f\in C(X), 
	\]
	where the limit is in the sense of nets (if $X$ is first countable, one can of course work with a sequence of functions $g_k$ supported on a countable basis of neighborhoods of $x$). If $f\in C(X)\cap H$, since $\Pi$ is self-adjoint we have \begin{eqnarray*}
		f(x)&=&\lim\int f g_Ud\mu \\
		&=&\lim\int \Pi(f) \overline{g_U}d\mu \\
		&=&\lim\int f \overline{\Pi(g_U)}d\mu.
	\end{eqnarray*}
	By \eqref{L1_bound}, the net $\{\Pi(g_U)\mu\}_U$ is contained in a ball of $\mathcal{M}(X)$. By Banach--Alaoglu's Theorem, it has a weak-$\star$ convergent subnet. If $\sigma_x\in  \mathcal{M}(X)$ is its limit, then \eqref{representing_measure} holds. Obviously, $\sigma_x\in\mathcal{M}_0$.
\end{proof}

\subsection{Bishop's and Mergelyan's theorems}\label{bishop_mergelyan_sec}

The second tool we need for the proof of Theorem \ref{abstract_thm} is a consequence of two classical theorems of complex analysis, which we proceed to recall.

\begin{thm}[Mergelyan's Theorem]
	Let $K\subset\C$ be compact, and denote by $A(K)$ the algebra of continuous complex-valued functions on $K$ that are holomorphic in its interior. 
	
	If $\C\setminus K$ is connected, then every $f\in A(K)$ can be uniformly approximated by polynomials. 
\end{thm}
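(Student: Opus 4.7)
The plan is to proceed in two stages, following the classical approach. First I would invoke Runge's approximation theorem to reduce the problem to uniform approximation of $f$ by functions that are holomorphic on some open neighborhood of $K$. Indeed, any such function can be uniformly approximated on $K$ by rational functions with poles in $\C\setminus K$, and under the hypothesis that $\C\setminus K$ is connected, a standard pole-pushing argument (iteratively replacing a rational function by the partial sums of its Laurent expansion around nearby poles, chained along a path in $\C\setminus K$ going to infinity) relocates each pole to $\infty$, yielding polynomial approximation.

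The substantive step is therefore to show that every $f\in A(K)$ can be uniformly approximated on $K$ by functions holomorphic on a neighborhood of $K$. For this I would extend $f$ to a compactly supported continuous function $\tilde f$ on $\C$ via Tietze's theorem, denoting its modulus of continuity by $\omega$. After slightly smoothing $\tilde f$ by convolution with a mollifier at scale $\delta$ to obtain $\tilde f_\delta \in C^1_c(\C)$ with $\|\tilde f_\delta - \tilde f\|_\infty \lesssim \omega(\delta)$ and $\|\bar\partial \tilde f_\delta\|_\infty \lesssim \omega(\delta)/\delta$, I would apply the Cauchy--Pompeiu formula
\[
\tilde f_\delta(z)=-\frac{1}{\pi}\iint_{\C}\frac{\bar\partial\tilde f_\delta(\zeta)}{\zeta-z}\,dA(\zeta).
\]
Tiling $\C$ by a grid of squares $\{Q_j\}$ of side $\delta$ with an associated smooth partition of unity $\{\chi_j\}$, this representation splits $\tilde f_\delta$ as a finite sum of localized Cauchy transforms supported in the squares meeting $\mathrm{supp}\,\bar\partial \tilde f_\delta$. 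The pieces corresponding to squares contained in the interior of $K$ or in $\C\setminus K$ are already holomorphic where needed; the problem is confined to the boundary squares.

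The technical heart of the proof, and its main obstacle, is Vitushkin's patching procedure for these boundary squares. For each $Q_j$ meeting both $K$ and $\C\setminus K$, the local Cauchy transform must be replaced by a rational function of $z$ with poles in $\C\setminus K$, chosen to match the first few terms of the Laurent expansion at $\infty$ of the original piece so that the substitution error is controlled by $\omega(\delta)$ uniformly on $K$. Here one invokes a classical geometric lemma: connectedness of $\C\setminus K$ guarantees, near each boundary square, the existence of a continuum in $\C\setminus K$ of diameter comparable to $\delta$, on which one supports the poles of a suitable model rational function performing the required matching (this is where the connectedness of $\C\setminus K$ is used, in addition to the pole-pushing of the first stage). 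Summing over $j$ yields a rational function with poles off $K$ that approximates $\tilde f_\delta$, hence $f$, uniformly on $K$ to within $O(\omega(\delta))$; letting $\delta\to 0$ finishes the proof.
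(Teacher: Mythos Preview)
The paper does not give its own proof of Mergelyan's Theorem; it merely states the result and refers the reader to Gamelin, Chapter II, Theorem 9.1. There is therefore nothing in the paper to compare your argument against.

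That said, your outline is a faithful high-level sketch of the classical constructive proof (essentially Mergelyan's original argument, phrased in the language of Vitushkin's localization scheme). The two points you identify as the technical heart---the geometric lemma producing, near each boundary square, a continuum in $\C\setminus K$ of diameter comparable to $\delta$ (which does follow from connectedness and unboundedness of $\C\setminus K$ via a path-to-infinity argument), and the construction of model rational functions supported on that continuum matching the leading Laurent coefficients of the local Cauchy transform---are exactly where the real work lies, and you correctly signal that these steps are asserted rather than carried out. As a replacement for the paper's citation, your sketch is more than sufficient; as a self-contained proof, the structure is correct but the estimates in the patching step would need to be written out in full.
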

See \cite{gamelin}, Chapter II, Thm. 9.1 for a proof. We remark that here and in the sequel by polynomial we always mean a polynomial of a complex variable with complex coefficients (in other words, a holomorphic polynomial).

\begin{thm}[Bishop's Theorem]
	Let $K\subset\C$ be compact and assume that $\C\setminus K$ is connected. Then every boundary point $z\in bK$ is a peak point for the algebra $A(K)$, that is, for every $z\in bK$ there exists $F\in A(K)$ such that $F(z)=1$ and $|F(w)|<1$ for every $w\in K\setminus\{z\}$. 
\end{thm}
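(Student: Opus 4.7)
The plan is the classical ``linear-fractional'' reduction: construct an auxiliary $h \in A(K)$ satisfying $h(z) = 0$ and $\Re h(w) > 0$ for every $w \in K \setminus \{z\}$. Then $F := 1/(1+h) \in A(K)$ peaks at $z$, because $F(z) = 1$ and for $w \neq z$ one has
\[
|1 + h(w)|^2 = (1 + \Re h(w))^2 + (\Im h(w))^2 > 1,
\]
hence $|F(w)| < 1$. This reduction uses only that $A(K)$ is an algebra containing the constants, and transforms the problem into producing $h$.

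For the construction of $h$, I would exploit both hypotheses together. Since $z \in bK$, pick $\zeta \in \C \setminus K$ arbitrarily close to $z$; the elementary candidates $g_\zeta(w) := (\zeta - w)^{-1} - (\zeta - z)^{-1} \in A(K)$ vanish at $z$ and blow up near $z$ only through the first summand. At boundary points admitting a locally separating line (e.g., smooth boundary points), the perpendicular bisector of the segment $[z,\zeta]$ separates $K \setminus \{z\}$ from $\zeta$ for an appropriate choice of $\zeta$, and $g_\zeta$, multiplied by a suitable unimodular constant, already has positive real part on $K \setminus \{z\}$. For arbitrary $z \in bK$ one must combine infinitely many such blocks, and here the connectedness of $\C \setminus K$ becomes essential: by Mergelyan polynomials are dense in $A(K)$, and, more importantly, every non-vanishing element of $A(K)$ admits a single-valued logarithm on $K$ holomorphic in the interior. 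This allows one to take logarithms and form series of the $g_\zeta$ amplifying the peaking behavior at $z$. A conceptually cleaner alternative is potential-theoretic: take the Green function of $S^2 \setminus K$ with a suitably chosen pole, produce its single-valued harmonic conjugate (again thanks to connectedness), obtain a holomorphic function with prescribed logarithmic behavior at $z$, and apply a final Möbius map to land in the desired $h$.

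The main obstacle is the uniform treatment of boundary points $z$ of arbitrary local geometry. The elementary Möbius construction dispatches the nice cases, but irregular boundary points force one to invoke polynomial convexity or potential theory; both are enabled precisely by the hypothesis that $\C \setminus K$ is connected, which is what guarantees that logarithms of non-vanishing $A(K)$-functions are single-valued and that Green functions admit single-valued holomorphic extensions near $K$. Without this topological input the construction fragments into genuinely multi-valued analytic objects and no element of $A(K)$ is produced.
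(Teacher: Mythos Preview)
The paper does not prove Bishop's Theorem: it quotes the statement and refers the reader to \cite{danielyan} for a proof and discussion. So there is no argument in the paper to compare your attempt against; the theorem functions there purely as a black box feeding into Corollary \ref{bishop_mergelyan_cor}.

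On its own merits, your sketch is correct in its reduction step (producing $h\in A(K)$ with $h(z)=0$ and $\Re h>0$ on $K\setminus\{z\}$, then setting $F=1/(1+h)$) and in the treatment of boundary points admitting an exterior tangent disc, where a single M\"obius block $g_\zeta$ suffices. The gap is in the general case, which is the whole content of the theorem. Your potential-theoretic route presupposes that $S^2\setminus K$ is regular for the Dirichlet problem at $z$, so that the Green function extends continuously to $0$ there and yields a holomorphic $h$ with the required sign of the real part. This regularity is \emph{not} implied by the hypothesis that $\C\setminus K$ is connected. For instance, take $K=\overline{D}\cup\{2+1/n:n\ge 2\}\cup\{2\}$: the complement is connected, $2\in bK$, and $2$ is an irregular boundary point (the portion of $K$ near $2$ is countable, hence polar, hence thin). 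Bishop's theorem still asserts that $2$ is a peak point, but no Green-function construction will produce the peaking function. Your alternative ``series of $g_\zeta$'s amplified via logarithms'' is not carried out in enough detail to assess; in particular you have not explained how to choose the poles $\zeta_n$ and the weights so that the resulting series converges in $A(K)$ and has strictly positive real part off $z$, which is exactly the delicate point at cuspidal or irregular $z$.

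The standard proofs bypass a direct construction entirely: one shows that every $z\in bK$ lies in the Choquet boundary of $A(K)$ (equivalently, the only representing measure for evaluation at $z$ is $\delta_z$), using the rational functions $w\mapsto (z-\zeta)/(w-\zeta)$ with $\zeta\notin K$ approaching $z$, and then invokes the metrizable-case identification of the Choquet boundary with the set of peak points. If you want a constructive argument, the paper \cite{danielyan} cited here gives one.
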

The function $F$ of the statement is called a \emph{peaking function}. See, e.g., the paper \cite{danielyan} for a proof and a discussion of this result.

Below, we will need to apply Mergelyan's and Bishop's theorems to a general compact set $K\subset\C$. One can proceed as follows. Let $U$ be the unique unbounded component of $\C\setminus K$, and put $K_1:=\C\setminus U$. Then $K_1$ is a compact set containing $K$, and can be informally described as the set obtained from $K$ by "filling in the holes". Its boundary $bK_1$ is sometimes called the \emph{outer boundary} of $K$. We denote it by $b_{\textrm{out}}K$. One may easily see that $b_{\textrm{out}}K\subset bK$. Since $K_1$ is compact and has connected complement, by Bishop's Theorem every $z\in b_{\textrm{out}}K$ is a peak point for $A(K_1)$. Let $F_z\in A(K_1)$ be a corresponding peaking function. By Mergelyan's Theorem, we can find a sequence $\{P_{z,k}\}_k$ of polynomials converging uniformly to $F_z$ in $K_1$, and hence in particular in $K$. We proved the following

\begin{cor}\label{bishop_mergelyan_cor}
	Let $K\subset\C$ be compact. Every point $z\in b_{\textrm{out}}K$ of the outer boundary of $K$ admits a peaking function $F\in A(K)$ that is a uniform limit of polynomials. 
\end{cor}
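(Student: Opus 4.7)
The plan is to carry out exactly the hole-filling construction that the preceding paragraph hints at, reducing from an arbitrary compact $K \subset \C$ to an auxiliary compact set with connected complement, on which Bishop's and Mergelyan's theorems apply verbatim.

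First I would form the auxiliary set. Let $U$ denote the unique unbounded component of $\C \setminus K$ and set $K_1 := \C \setminus U$. Then $K_1$ is a compact subset of $\C$ (since $U$ is open and contains a neighborhood of infinity), it contains $K$, and $\C \setminus K_1 = U$ is connected by construction. By the definition recalled just above the corollary, $b_{\text{out}} K = b K_1$.

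Next I would apply the two named theorems to $K_1$. Fix $z \in b_{\text{out}} K = b K_1$. Since $\C \setminus K_1$ is connected, Bishop's theorem applied to $A(K_1)$ produces a peaking function $F \in A(K_1)$ with $F(z) = 1$ and $|F(w)| < 1$ for every $w \in K_1 \setminus \{z\}$. Mergelyan's theorem, also applicable to $K_1$, then yields a sequence of polynomials $P_k$ converging uniformly to $F$ on $K_1$. Finally I would restrict the picture to $K$: since $K \subseteq K_1$ forces $\mathrm{int}(K) \subseteq \mathrm{int}(K_1)$, the restriction $F|_K$ is continuous on $K$ and holomorphic in its interior, hence $F|_K \in A(K)$; the peaking inequalities are inherited because $K \subseteq K_1$; and uniform convergence of $P_k$ to $F$ on $K_1$ trivially descends to uniform convergence on $K$, giving the desired polynomial approximation.

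There is no substantive obstacle here; the argument is essentially bookkeeping around two classical theorems. The only details worth checking are the two already indicated, namely that $\mathrm{int}(K) \subseteq \mathrm{int}(K_1)$ (immediate from $K \subseteq K_1$) and that $b_{\text{out}} K \subseteq b K_1$ (immediate from the definition of outer boundary).
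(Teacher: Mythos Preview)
Your proposal is correct and follows exactly the same route as the paper: form the hole-filled set $K_1 = \C \setminus U$, apply Bishop's and Mergelyan's theorems to $K_1$ (which has connected complement), and restrict everything to $K$. The only addition you make is the explicit check that $F|_K \in A(K)$ via $\mathrm{int}(K) \subseteq \mathrm{int}(K_1)$, which the paper leaves implicit.
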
 

Let us conclude this section with an elementary property of outer boundaries that will be invoked later. 

\begin{prop}\label{outer_prop} Let $K\subset\C$ be compact and connected. Then $\Re(K)=\Re(b_{\textrm{out}}K)$ and $\Im(K)=\Im(b_{\textrm{out}}K)$.
	
If, in addition, $K$ is not a singleton, then $b_{\textrm{out}}K$ is uncountable.
\end{prop}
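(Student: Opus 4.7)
The plan is to exploit the fact that $b_{\textrm{out}}K=bK_1\subseteq K$, where $K_1=\C\setminus U$ and $U$ is the unbounded component of $\C\setminus K$. Once this inclusion is established, one direction of both equalities is free; the work is in getting the reverse inclusions by means of an elementary \emph{extremal point} construction on vertical (resp.\ horizontal) lines. The uncountability statement is then immediate.

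First, I would observe that $b_{\textrm{out}}K\subseteq K$. Indeed, if $z\in K_1\setminus K$, then $z$ belongs to some bounded component $V$ of $\C\setminus K$, which is an open subset of $\C$ disjoint from $U$; hence $V\subseteq K_1$ and $z$ is interior to $K_1$, so $z\notin bK_1$. This immediately gives $\Re(b_{\textrm{out}}K)\subseteq \Re(K)$ and $\Im(b_{\textrm{out}}K)\subseteq \Im(K)$.

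For the opposite inclusion (I do the real part; the imaginary part is identical up to a rotation by $i$), fix $r\in \Re(K)$ and consider the vertical line $L_r=\{z\in\C\colon \Re(z)=r\}$. Since $K\subseteq K_1$, the set $L_r\cap K_1$ is a nonempty compact subset of $L_r$, so it has a point $w^+$ of maximal imaginary part. I claim $w^+\in bK_1=b_{\textrm{out}}K$: for every $\varepsilon>0$, the point $w^+ +i\varepsilon$ lies on $L_r$ but not in $K_1$ (by maximality of $\Im(w^+)$), so it lies in $U$. Hence every neighborhood of $w^+$ meets $\C\setminus K_1$, and since $w^+\in K_1$ (which is closed), this places $w^+$ on $bK_1$. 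As $\Re(w^+)=r$, this proves $\Re(K)\subseteq \Re(b_{\textrm{out}}K)$.

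Finally, for the uncountability: by connectedness and compactness of $K$, the sets $\Re(K)$ and $\Im(K)$ are closed bounded intervals of $\R$. If $K$ is not a singleton, then at least one of them is not a single point—otherwise $K$ would reduce to a point—and is therefore uncountable. By the two equalities just proved, the projection of $b_{\textrm{out}}K$ onto the corresponding coordinate axis is uncountable, so $b_{\textrm{out}}K$ itself is uncountable. I do not anticipate any real obstacle here; the only subtle point is the verification that the extremal point $w^+$ truly lies on the topological boundary of $K_1$, and that is handled by the one-sided argument above using the unbounded component $U$.
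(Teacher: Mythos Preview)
Your proof is correct and follows essentially the same approach as the paper: both arguments take, for each $r\in\Re(K)$, the point of extremal imaginary part on the vertical slice and observe that this point lies on $b_{\textrm{out}}K$; the uncountability then follows because at least one of $\Re(K)$, $\Im(K)$ is a nondegenerate interval. The only cosmetic difference is that you take the extremum of $L_r\cap K_1$ while the paper takes it over $L_r\cap K$, but both variants work for the same reason (the vertical ray above the extremal point is connected, unbounded, and disjoint from $K$, hence contained in $U$).
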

Here $\Re(A):=\{\Re(z)\colon \ z\in A\}$, where $A\subseteq\C$ and $\Re(z)$ is the real part of $z$. Analogously, $\Im(A):=\{\Im(z)\colon \ z\in A\}$, where $\Im(z)$ is the imaginary part of $z$.
\begin{proof}
	The suprema and infima of any nonempty slice $K_x:=\{y\colon \ x+iy\in K\}$ (resp. $K^y:=\{x\colon \ x+iy\in K\}$) are elements of $\Re(b_{\textrm{out}}K)$ (resp. $\Im(b_{\textrm{out}}K)$). This proves the first statement. 
	
	If $K$ is not a singleton, then at least one of $\Re(K)$ and $\Im(K)$ is a nondegenerate closed interval. The second statement follows immediately. 
\end{proof}

\subsection{Proof of Theorem \ref{abstract_thm}}\label{abstract_thm_proof_sec}

We argue by contradiction, assuming that there exists $C<+\infty$ such that \eqref{L1_bound} holds. By Lemma \ref{representing_meas_lem}, for every $x\in X$ we can choose a measure $\sigma_x\in \mathcal{M}_0$ satisfying \eqref{representing_measure}. 

Fix any nonconstant $f_0\in A$, and let $K:=f_0(X)\subset\C$, which is compact, connected, and not a singleton. By Corollary \ref{bishop_mergelyan_cor}, for every $z\in b_{\textrm{out}}K$ there is $F_z\in A(K)$ such that $F_z(z)=1$ and $|F_z(w)|<1$ for every $w\in K\setminus \{z\}$, and there is a sequence $\{P_{z,k}\}_k$ of polynomials converging to $F_z$, uniformly in $K$. 

Since $A$ is an algebra with unit, $P_{z,k}(f_0)^N\in A$ for every $N\in\N$. As a uniform limit of elements of $A$, the continuous function $F_z(f_0)^N$ is an element of $H$. By \eqref{representing_measure}, \[
\int_X F_z(f_0)^Nd\overline{\sigma_x} = 1 \qquad \forall z\in b_{\textrm{out}}K,\quad \forall x\in f_0^{\leftarrow}\{z\}.
\]
By the dominated convergence theorem, letting $N$ tend to $\infty$ we conclude that $\sigma_x(f_0^{\leftarrow}\{z\})=1$ for every $x$ and $z$ as above. Since $\sigma_x$ is absolutely continuous with respect to $\mu$, we must have $\mu(f_0^{\leftarrow}\{z\})>0$ for every $z\in b_{\textrm{out}}K$. This is the desired contradiction, because $b_{\textrm{out}}K$ is uncountable, by Proposition \ref{outer_prop}, and the sets $f_0^{\leftarrow}\{z\}$ are pairwise disjoint. The proof is complete.

\subsection{A (more elementary) variant of Theorem \ref{abstract_thm}}\label{variant_sec}

Theorem \ref{abstract_thm} admits the following variant, that does not rely on Bishop's and Mergelyan's theorems.

\begin{thm}\label{abstract_2_thm} 
	Assume that there exists $x_0\in X$ such that: 
	\begin{itemize}
		\item[i')] $x_0$ is a peak point for $A$, that is, there exists $f_0\in A$ such that $f_0(x_0)=1$ and $|f_0(x)|<1$ for every $x\in X\setminus \{x_0\}$;
		\item[ii')] for every measure $\sigma\in\mathcal{M}_0$, we have $\sigma\{x_0\}=0$. 
	\end{itemize}	
	Then $\Pi$ cannot be extended to a bounded linear operator on $L^1(X,\mu)$. 
\end{thm}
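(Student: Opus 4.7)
The plan is to mirror the proof of Theorem \ref{abstract_thm}, but to exploit the peak point hypothesis directly, so that no appeal to Mergelyan or Bishop is required. We argue by contradiction, assuming \eqref{L1_bound} holds.

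First, I would apply Lemma \ref{representing_meas_lem} at the single point $x_0$ to obtain a measure $\sigma_{x_0}\in\mathcal{M}_0$ satisfying
\[
\int_X f\,d\overline{\sigma_{x_0}} = f(x_0) \qquad \forall f\in C(X)\cap H.
\]
Next, since $A$ is a subalgebra with unit contained in $C(X)\cap H$ and $f_0\in A$, every power $f_0^N$ (with $N\in\N$) lies in $A\subseteq C(X)\cap H$. Plugging $f_0^N$ into the representation formula yields
\[
\int_X f_0^N\,d\overline{\sigma_{x_0}} = f_0(x_0)^N = 1 \qquad \forall N\in\N.
\]

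On the other hand, since $X$ is compact and $f_0$ continuous, $|f_0|$ attains its supremum; combined with $f_0(x_0)=1$ and $|f_0(x)|<1$ for $x\neq x_0$, this forces $|f_0|\leq 1$ everywhere on $X$, with strict inequality off $x_0$. Consequently $f_0^N \to \mathbf{1}_{\{x_0\}}$ pointwise on $X$, and the convergence is dominated by the constant $1$, which is integrable against the finite measure $|\sigma_{x_0}|$. The dominated convergence theorem then gives
\[
\lim_{N\to\infty}\int_X f_0^N\,d\overline{\sigma_{x_0}} = \overline{\sigma_{x_0}\{x_0\}} = 0,
\]
where the last equality uses hypothesis (ii'). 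This contradicts the previous display, completing the proof.

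There is no real obstacle here: once the representing measure is in hand, the peak point hypothesis replaces the combined use of Corollary \ref{bishop_mergelyan_cor} and Proposition \ref{outer_prop}, and the contradiction is produced by a single application of dominated convergence instead of the uncountability argument used in Theorem \ref{abstract_thm}.
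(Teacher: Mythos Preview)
Your proof is correct and follows essentially the same line as the paper's: argue by contradiction, invoke Lemma~\ref{representing_meas_lem} to get the representing measure $\sigma_{x_0}\in\mathcal{M}_0$, plug in the powers $f_0^N\in A$, and use dominated convergence to force $\sigma_{x_0}\{x_0\}=1$, contradicting (ii'). You have simply spelled out the DCT justification a bit more explicitly than the paper does.
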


We invite the reader to notice how i'), resp. ii'), is stronger, resp. weaker, than the corresponding hypothesis in Theorem \ref{abstract_thm}. 

\begin{proof}
	We argue by contradiction, assuming that \eqref{L1_bound} holds. Lemma \ref{representing_meas_lem} yields a measure $\sigma_{x_0}\in \mathcal{M}_0$ such that \begin{equation*}
		\int fd\overline{\sigma_{x_0}} = f(x_0) \qquad \forall f\in C(X)\cap H.
	\end{equation*}
	Applying this identity to $f_0^N\in A$ and letting $N$ tend to $\infty$, we get $\sigma_{x_0}\{x_0\}=1$, a contradiction. 
\end{proof}

\subsection{Invariant projections on compact connected abelian groups}\label{invariant_proj_sec}

We want to show that various invariant projections on compact connected abelian groups fit into the Abstract Setting of the previous sections.

 We refer to \cite[Chapter 4]{folland} for the foundations of Fourier analysis on locally compact abelian groups (l.c.a.g.'s). As is customary in this field, we assume that every topological group is Hausdorff. 

We recall that to every l.c.a.g. $G$, one may associate another l.c.a.g. $\widehat{G}$, called the \emph{dual group} and consisting of the unitary characters of $G$, namely, the continuous homomorphisms of $G$ into the circle group of complex numbers of modulus $1$, equipped with pointwise multiplication and the topology of compact convergence. A l.c.a.g. $G$ is compact if and only if $\widehat{G}$ is discrete (by \cite[Proposition 4.4]{folland}), and in this case $G$ is connected if and only if $\widehat{G}$ is torsion-free (by \cite[p. 99, Corollary 4]{morris}). Hence, by Pontrjagin duality, compact connected abelian groups are, up to isomorphism, the duals of discrete torsion-free abelian groups. The simplest example is the torus group $\left(\R/(2\pi\Z)\right)^n$ with dual $\Z^n$ ($n\in \N$), the setting of the classical theory of Fourier series. By the structure theory of finitely generated abelian groups \cite[Chapter I, Theorem 8.4]{lang}, the torus groups are the only compact connected abelian groups whose dual is finitely generated. 

Let $G$ be a compact connected abelian group, equipped with the normalized Haar measure $\mu$. Since the Fourier transform	\[
\mathcal{F}f(\xi):=\int_G f(x)\overline{\xi(x)}d\mu(x)
\] establishes a unitary isomorphism of $L^2(G,\mu)$ onto $\ell^2(\widehat{G})$, for any subset $\Gamma\subseteq \widehat{G}$ we have a corresponding closed subspace \[
H_\Gamma:=\{f\in L^2(G,\mu)\colon \ \mathcal{F}f(\xi)=0\quad \forall \xi\notin\Gamma\}
\] and the orthogonal projection $\Pi_\Gamma:L^2(G,\mu)\rightarrow H_\Gamma$. 

Now assume that $\Gamma$ is a submonoid of $\widehat{G}$, that is, $1\in \Gamma$ and $\xi_1+\xi_2\in \Gamma$ whenever $\xi_1,\xi_2\in \Gamma$ (here $1$ is the trivial character). \begin{flushleft}
	
\end{flushleft}One may easily verify that in this case \[
A_\Gamma:=\C-\textrm{span} \text{ of }\{\xi:G\rightarrow \C\colon \ \xi\in \Gamma\}
\] is a subalgebra with unit of $C(G)\cap H_\Gamma$. Thus, we are in the situation of the Abstract Setting of Section \ref{gen_setting_sec}. The weak-$\star$ closure $\mathcal{M}_0$ of $H_\Gamma$ in the space of regular complex measures $\mathcal{M}(G)$ consists of measures $\sigma$ such that \[
\mathcal{F}\sigma(\xi):=\int_G\overline{\xi(x)}d\mu(x)=0\qquad \forall \xi\notin \Gamma.
\]
The following definition is thus relevant. 

\begin{dfn}[Riesz set, \cite{meyer}]\label{riesz_dfn} A subset $\Gamma\subset \widehat{G}$ is said to be a Riesz set if every measure $\sigma\in \mathcal{M}(G)$ whose Fourier transform $\mathcal{F}\sigma$ vanishes off $\Gamma$ is absolutely continuous with respect to the Haar measure $\mu$. 
\end{dfn}

The classical F. and M. Riesz Theorem states that $\Gamma=\N$, thought of as a subset of the dual $\Z$ of $\R/(2\pi \Z)$, is a Riesz set. That any sector $\Gamma\subset\Z^n$ of angle less than $\pi$ is also a Riesz set is a theorem of Bochner \cite{bochner} (here the group is $\left(\R/(2\pi\Z)\right)^n$). We refer the reader to the literature on Riesz sets for more results of this kind. 

Now that all the ingredients are in place, we can specialize Theorem \ref{abstract_thm} 
to the present situation. 

\begin{thm}\label{invariant_thm}
Let $G$ be a compact connected abelian group, and let $\Gamma$ be a submonoid of its dual group $\widehat{G}$. If:\begin{itemize}
	\item[i)] $\Gamma$ is nontrivial, that is, $\Gamma\supsetneq\{1\}$, and
	\item[ii)] $\Gamma$ is a Riesz set,
	\end{itemize} then the projection $\Pi_\Gamma$ cannot be extended to a bounded linear operator on $L^1(G,\mu)$. 
\end{thm}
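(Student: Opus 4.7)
The plan is to deduce the statement directly from Theorem \ref{abstract_thm} by taking $X := G$, $\mu$ the normalized Haar measure, $H := H_\Gamma$ with projection $\Pi := \Pi_\Gamma$, and $A := A_\Gamma$. The discussion preceding the statement already checks that $A_\Gamma$ is a subalgebra with unit contained in $C(G)\cap H_\Gamma$, so the Abstract Setting of Section \ref{gen_setting_sec} is in force provided $(G,\mu)$ satisfies the basic hypotheses: compactness and connectedness of $G$ are assumed, $\mu$ is regular and finite as a Radon measure, and it has full support because translation invariance together with compactness forces every nonempty open set to have positive Haar measure. Thus the whole task reduces to verifying hypotheses i) and ii) of Theorem \ref{abstract_thm}.

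For i), the assumption $\Gamma \supsetneq \{1\}$ furnishes a nontrivial character $\xi \in \Gamma$. Since $G$ is connected, the image $\xi(G)$ is a connected subgroup of the circle and therefore either $\{1\}$ or all of $\mathbb{T}$; being nontrivial, $\xi$ is surjective onto $\mathbb{T}$, and in particular nonconstant. Hence $f_0 := \xi \in A_\Gamma$ provides the required nonconstant element.

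For ii), I would argue that $\mathcal{M}_0$ coincides with the set of measures whose Fourier transform vanishes off $\Gamma$, which by the Riesz set hypothesis consists entirely of $\mu$-absolutely continuous measures. Concretely, if $\sigma \in \mathcal{M}_0$ is the weak-$\star$ limit of a net $\{f_\alpha\}\subset H_\Gamma$ and $\eta \in \widehat{G}\setminus\Gamma$, then $\overline{\eta}\in C(G)$, so
\[
\mathcal{F}\sigma(\eta) = \int_G \overline{\eta}\,d\sigma = \lim_\alpha \int_G \overline{\eta}\,f_\alpha\,d\mu = \lim_\alpha \mathcal{F}f_\alpha(\eta) = 0,
\]
because $\mathcal{F}f_\alpha$ is supported in $\Gamma$ for every $\alpha$. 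Thus $\mathcal{F}\sigma$ vanishes off $\Gamma$ and Definition \ref{riesz_dfn} gives $\sigma\ll\mu$, as required.

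There is really no serious obstacle here: the work of combining Bishop's and Mergelyan's theorems with the representing-measure mechanism is already absorbed into Theorem \ref{abstract_thm}, and the present statement is just a translation exercise. The only point that demands any care is matching the conclusion of the Riesz set hypothesis with membership in $\mathcal{M}_0$, which is the content of the weak-$\star$ computation above.
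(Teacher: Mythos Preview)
Your proof is correct and follows exactly the approach the paper intends: the theorem is presented there as a direct specialization of Theorem \ref{abstract_thm}, and you have filled in precisely the verification of hypotheses i) and ii) that the paper leaves implicit. One inessential remark: for hypothesis i) you do not need the connectedness argument, since a nontrivial character is by definition not identically $1$ and hence already nonconstant.
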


We leave to the interested reader the specialization of Theorem \ref{abstract_2_thm} to the present setting. 

Notice that if $G=\R/(2\pi \Z)$ and $\Gamma=\N$, then the operator $2\Pi_\Gamma-1$ is the classical Hilbert transform, whose $L^1$ unboundedness is well-known (it follows, e.g., from inspection of its singular integral kernel). The whole point of our specialization/digression to compact connected abelian groups is to show that our methods provide an abstract "cause" of $L^1$ unboundedness that does not depend on any computation or consideration specific to the torus group and, more importantly, that finds application in much greater generality, as we will show in the sequel. 

\section{Projection operators from involutive structures}\label{involutive_sec}

The goal of this section is to describe a wide variety of projection operators, including Bergman and Szeg\"o projections, in light of the Abstract Setting of Section 
\ref{gen_setting_sec}. In order to do that, we first need to recall the notion of involutive structure and distributional solution of an involutive structure. 

\subsection{Involutive structures} Let $X$ be a real smooth manifold. An \emph{involutive structure} on $X$ is a complex subbundle $E$ of the complexified tangent bundle $\C TX=\C\otimes TX$ satisfiying the following property, known as \emph{formal integrability}: if $Z$ and $W$ are smooth sections of $E$, then their commutator $[Z,W]$ is a section of $E$ too. As usual, we refer to the complex dimension of the fibers of $E$ as the rank of $E$. See \cite{ber_cord_hou} for various aspects of the analytic theory of involutive structures. Here we limit ourselves to recalling two important classes of examples (of which the second is actually a generalization of the first).

\begin{ex}[Complex structures] A complex structure is an involutive structure $E$ with the additional property that $\C T_xX=E_x\oplus \overline{E}_x$ for every $x\in X$, which in particular implies that the real dimension of $X$ is $2n$, where $n$ is the rank of $E$. A celebrated theorem of Newlander and Nirenberg states that this definition of complex structure is actually equivalent to the usual one in terms of a maximal complex atlas (see, e.g., \cite[p. 47]{ber_cord_hou}): in local holomorphic coordinates $z_1,\ldots, z_n$, $E=T_{1,0}X:=\textrm{span}\{\partial_{z_1}, \ldots, \partial_{z_n}\}$. One simply says that $X$ is a complex manifold. 
\end{ex}

\begin{ex}[CR structures] A CR structure on an $N$-dimensional real smooth manifold $X$ is a rank $n$ involutive structure $E$ with the additional property that $E_x\cap \overline{E}_x=0$ for every $x\in X$. In this case, one says that $(X,E)$ is a CR manifold of CR codimension $N-2n$. The real bundle $HX:=\Re(E)\subset TX$ (where $\Re$ denotes the real part) is called the horizontal bundle of the CR manifold, and has real codimension $N-2n$, justifying the terminology.
	
If the CR codimension is one, we say that $(X,E)$ is a CR manifold of hypersurface type. Notice that $X$ has odd real dimension $2n+1$ in this case. We recall that every real hypersurface $X$ of a complex manifold $Y$ is a CR manifold of hypersurface type in a natural way: the structure bundle is defined by $E:=T_{1,0}Y\cap \C TX$. A CR structure on a manifold $X$ is customarily denoted by $T_{1,0}X$, and one also writes $T_{0,1}X:=\overline{T_{1,0}X}$. 

There is a vast literature on this subject: see, e.g., \cite{boggess}, \cite{dragomir_tomassini}.
\end{ex}

We denote, as customary, by $\Gamma(U;E)$ the space of smooth sections of $E$ over the open set $U\subseteq X$. In particular, $\Gamma(U;\C TX)$ is the space of smooth complex vector fields on $U$. 

\subsection{Distributional solutions} 
Assume now that the real smooth manifold $X$ is endowed with a smooth positive measure $\mu$ (that is, a measure having a smooth positive density with respect to Lebesgue measure in local coordinates). \newline 
Let $U$ be an arbitrary open set of $X$. The "reference measure" $\mu$ allows us to think of distributions on $U$ as continuous linear functionals on $C^\infty_c(U)$ (see \cite[Section 6.3]{hormanderI}). We denote by $\mathcal{D}'(U)$ the space of distributions on $U$. The function $f\in L^1_{\textrm{loc}}(U)$ is identified with the distribution given by integration against $fd\mu$.

We recall how one may define the action of a smooth complex vector field $Z$ on a distribution $f\in \mathcal{D}'(U)$. First, one introduces the formal adjoint $Z^\dagger$ with respect to $\mu$, i.e., the first order partial differential operator $Z^\dagger$ uniquely defined by the identity \[
\int_X f Z^\dagger \varphi d\mu= \int_X\varphi Zfd\mu\qquad \forall f\in C^\infty(X), \varphi\in C_c^\infty(X). 
\]
A simple integration by parts shows that in local coordinates with respect to which $\mu=\widetilde\mu(x)dx_1\ldots dx_N$ and $Z=\sum_{k=1}^Na_k(x)\partial_{x_k}$, we have \[Z^\dagger=-\sum_{k=1}^Na_k(x)\partial_{x_k}-\widetilde\mu(x)^{-1}\sum_{k=1}^N\partial_{x_k}\left(\widetilde\mu(x)a_k(x)\right).\] 
Next, one declares that $Zf=g$, where $f,g\in \mathcal{D}'(U)$, if 
\[
\left\langle f, Z^\dagger \varphi\right\rangle = \left\langle g, \varphi\right\rangle \qquad \forall \varphi\in C_c^\infty(U),
\]
where the angle brackets denote the pairing between distributions and test functions. In particular, if $f,g\in L^1_{\textrm{loc}}(U)$, $Zf=g$ means that \[
\int_U f Z^\dagger \varphi d\mu= \int_Xg\varphi d\mu\qquad \forall \varphi\in C_c^\infty(U).
\]

Given an involutive structure $E$ and a space of distributions $\mathcal{F}(U)\subseteq \mathcal{D}'(U)$ on the open set $U\subseteq X$, we can now define the associated space of solutions as \[
\textrm{Sol}(E;\mathcal{F}(U)):=\{f\in \mathcal{F}(U)\colon\ Zf=0\quad \forall Z \in \Gamma(U;E)\}.\]

\begin{ex}
If $T_{1,0}X$ is a complex structure on $X$, then \[\mathcal{O}(U):=\textrm{Sol}(T_{0,1}X;C^1(U))=\textrm{Sol}(T_{0,1}X;\mathcal{D}'(U))\] is the space of holomorphic functions on $U$, while \[\mathcal{O}(U)\cap L^2(U,\mu)=\textrm{Sol}(T_{0,1}X;L^2(U,\mu))\] is the space of $\mu$-square integrable holomorphic functions on $U$, usually called \emph{weighted Bergman space} with "weight" $\mu$ in the literature. Recall that a distribution annihilated by every section of $T_{0,1}X$, that is, satisfying the Cauchy--Riemann equations, is automatically an ordinary smooth function by elliptic regularity.
\end{ex}

\begin{ex}
	If $T_{1,0}X$ is a CR structure on $X$, then $\textrm{Sol}(T_{0,1}X;\mathcal{D}'(U))$ is the space of CR distributions on $U$, $\textrm{Sol}(T_{0,1}X;C^1(U))$ is the space of CR functions of class $C^1$  on $U$, etc.
\end{ex}

\subsection{Projection operators}\label{settings_sec} We are now in a position to describe various instances of the Abstract Setting of Section 
\ref{gen_setting_sec}. 

\begin{ex2}\label{involutive_ex} Let $X$ be a compact connected real smooth manifold, equipped with a smooth positive measure $\mu$ and an involutive structure $E$. We put
\[H:=\mathrm{Sol}(E;L^2(X,\mu))\] and 
\[A:=\mathrm{Sol}(E;C^1(X)).\]
As in Section \ref{gen_setting_sec}, $\mathcal{M}_0\subseteq\mathcal{M}(X)$ is the weak-$\star$ closure of $H$, thought of as a subspace of $\mathcal{M}(X)$. Observe that:\begin{enumerate}
\item $H$ is a closed linear subspace of $L^2(X,\mu)$, and thus the orthogonal projection $\Pi:L^2(X,\mu)\rightarrow H$ is well-defined;
\item $\mathcal{M}_0\subseteq \mathrm{Sol}(E;\mathcal{M}(X))$;
\item $A$ is a subalgebra of $C(X)$ contained in $H$ and containing the constant functions.
	\end{enumerate}\end{ex2}

The first two observations follow immediately from the definitions and the fact that both $L^2$ and weak-$\star$ convergence imply convergence in the sense of distributions. The last one follows from the Leibniz rule.

Notice that, as a set, $H$ is independent of the choice of the measure, since the set $L^2(X,\mu)$ is independent of $\mu$, in virtue of the compactness of $X$. Nevertheless, the operator $\Pi$ depends both on the choice of the reference measure $\mu$ and of the involutive structure $E$. 

If $(X,T_{1,0}X)$ is a CR manifold of hypersurface type, the operator \[\mathcal{S}_{X,\mu}:L^2(X,\mu)\rightarrow \mathrm{CR}^2(X,\mu):=\textrm{Sol}(T_{0,1}X;L^2(U,\mu))\]
defined above is known as the \emph{Szeg\"o projection} on $X$ with respect to the weight $\mu$.

Specializing Theorem \ref{abstract_thm} and Theorem \ref{abstract_2_thm} to Setting A, we obtain the following result, where any element of $\mathrm{Sol}(E;\mathcal{D}'(X))$ is referred to, for simplicity, as a "solution". 

\begin{thm}\label{settingA_thm}
Let $X$, $\mu$, $E$, $H$, $\Pi$ be as in Setting A above. Assume that: 
\begin{itemize}
		\item[i)] there is at least a nonconstant $C^1$ solution;
		\item[ii)] every complex measure solution is absolutely continuous (with respect to $\mu$ or, equivalently, any other smooth positive measure).
	\end{itemize}	
	Then $\Pi$ cannot be extended to a bounded linear operator on $L^1(X,\mu)$. 
	
	The same conclusion holds if there is at least a $C^1$ solution that peaks at $x_0\in X$.
\end{thm}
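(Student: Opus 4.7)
The plan is to verify that Setting A fits the Abstract Setting of Section \ref{gen_setting_sec} and then to match the present hypotheses to those of Theorem \ref{abstract_thm} (for the first assertion) and Theorem \ref{abstract_2_thm} (for the second one).

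First I would invoke the three observations accompanying Setting A. That $H$ is closed in $L^2(X,\mu)$ and that $A$ is a unital subalgebra of $C(X)\cap H$ are immediate: the algebra property comes from the Leibniz rule, the inclusion $A\subseteq C(X)\cap H$ uses compactness of $X$ to embed $C^1(X)$ into $L^2(X,\mu)$, and constants lie in $A$ since every vector field annihilates them. The crucial observation is $\mathcal{M}_0\subseteq\mathrm{Sol}(E;\mathcal{M}(X))$, which I would justify by a standard weak-$\star$ limit argument: if $\{f_n\}_n\subset H$ and $f_n\mu\rightharpoonup\sigma$ in $\mathcal{M}(X)$, then for every smooth section $Z$ of $E$ and every $\varphi\in C_c^\infty(X)$,
\[
0=\int_X f_n\, Z^\dagger\varphi\, d\mu \longrightarrow \int_X Z^\dagger\varphi\, d\sigma,
\]
so $Z\sigma=0$ distributionally, i.e.\ $\sigma\in\mathrm{Sol}(E;\mathcal{M}(X))$.

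For the first assertion, Theorem \ref{abstract_thm} applies directly: its hypothesis (i) is literally our (i), and its hypothesis (ii) follows by combining the inclusion $\mathcal{M}_0\subseteq\mathrm{Sol}(E;\mathcal{M}(X))$ with our assumption that every complex measure solution is absolutely continuous with respect to $\mu$. For the second assertion, Theorem \ref{abstract_2_thm} applies: the $C^1$ solution peaking at $x_0$ is by definition a function $f_0\in A$ realizing hypothesis (i'), while hypothesis (ii') follows (still under (ii)) from absolute continuity of every $\sigma\in\mathcal{M}_0$ together with the elementary fact that a smooth positive measure on a manifold has no atoms (in any chart it is a smooth positive density against Lebesgue measure, so $\mu\{x_0\}=0$, hence $\sigma\{x_0\}=0$).

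I do not foresee a serious obstacle here: the only genuinely analytic step is the weak-$\star$ limit argument showing $\mathcal{M}_0\subseteq\mathrm{Sol}(E;\mathcal{M}(X))$, and it is already essentially stated among the observations following Setting A. The remainder is a matter of translating the abstract hypotheses of Theorem \ref{abstract_thm} and Theorem \ref{abstract_2_thm} into the geometric language of involutive structures.
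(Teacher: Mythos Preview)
Your treatment of the first assertion is correct and coincides with the paper's: one simply feeds Setting A into Theorem \ref{abstract_thm}, using the inclusion $\mathcal{M}_0\subseteq\mathrm{Sol}(E;\mathcal{M}(X))$ together with hypothesis (ii).

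The second assertion, however, is \emph{not} meant to be proved ``still under (ii)'', and this is where your argument falls short. Read the statement again: the sentence ``The same conclusion holds if there is at least a $C^1$ solution that peaks at $x_0$'' is offered as a stand-alone alternative to the pair (i)--(ii). Indeed, if (ii) were still assumed, the second part would be vacuous: a peaking $C^1$ solution is in particular nonconstant, so (i) holds and the first part already gives the conclusion. Moreover, the applications in Section \ref{bergman_sec} (notably the proof of Theorem \ref{intr_szego_thm}) invoke only the second part, without ever verifying the F.\ and M.\ Riesz property (ii).

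What the paper actually does is show that hypothesis (ii') of Theorem \ref{abstract_2_thm} is \emph{automatic} in Setting A, with no absolute continuity assumption. Since $\mathcal{M}_0\subseteq\mathrm{Sol}(E;\mathcal{M}(X))$, every $\sigma\in\mathcal{M}_0$ satisfies $L\sigma=0$ distributionally for each smooth section $L$ of $E$. Picking any such $L$ with $L(x_0)\neq 0$ (the structure has positive rank), one argues that a complex measure annihilated by a first-order operator with nonvanishing principal part at $x_0$ cannot have an atom at $x_0$. The paper phrases this as an exercise in distribution theory, or as a consequence of the wave front set Lemma \ref{singular_measure_lem}: an atom at $x_0$ would force $\mathrm{WF}_{x_0}(\sigma)$ to be all of $T^*_{x_0}X\setminus\{0\}$, while $L\sigma=0$ confines $\mathrm{WF}_{x_0}(\sigma)$ to the characteristic set of $L$, which is a proper subset. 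Your appeal to $\mu\{x_0\}=0$ via absolute continuity is correct arithmetic but uses a hypothesis you are not entitled to here.
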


\begin{proof}
The first part is just a translation of Theorem \ref{abstract_thm} to Setting A. The second part follows from Theorem \ref{abstract_2_thm} if we show that assumption ii') is automatically satisfied, i.e., that every complex measure solution is nonatomic. More precisely, if $\sigma$ is a regular complex measure and $L\sigma=0$ in the sense of distributions for some complex vector field $L$ that does not vanish at $x_0$, then $\sigma\{x_0\}=0$. This is an exercise in the theory of distributions, or a simple consequence of Lemma \ref{singular_measure_lem} and the arguments of Section \ref{wf_sec}. 
\end{proof}

We remark that both assumption i) and ii) are fairly nontrivial statements about the involutive structure. The first is a \emph{global existence} result, which fails, e.g., on compact connected complex manifolds, where the maximum principle forces any global holomorphic function to be constant, and may or may not hold on a compact CR manifold (as seen in the introduction). The second assumption is a \emph{regularity property} of the system of PDEs encoded by the involutive structure, which has been investigated in the literature under the name of \emph{F. and M. Riesz property}. See, e.g., \cite{brummelhuis, berhanu_hounie_2001,berhanu_hounie_2003, berhanu_hounie_2005}, and cf. Definition \ref{riesz_dfn}.

Setting A may be easily generalized to cover precompact domains of a manifold.

\begin{ex3}\label{domain_ex} Let $Y$ be a real smooth manifold, equipped with a smooth positive measure $\mu$ and an involutive structure $E$. Let $D\subseteq X$ be a precompact domain, i.e., a connected open set with compact closure $\overline{D}=:X$, so that the restriction $\mu_{|X}$ is finite. We put
	\[H:=\left\{f\in L^2(X,\mu)\colon \ f_{|D}\in \mathrm{Sol}(E;L^2(D,\mu))\right\}\] and 
	\[A:=\left\{f\in C(X)\colon \ f_{|D}\in \mathrm{Sol}(E;C^1(D))\right\}.\]
	As in Section \ref{gen_setting_sec}, $\mathcal{M}_0\subseteq\mathcal{M}(X)$ is the weak-$\star$ closure of $H$, thought of as a subspace of $\mathcal{M}(X)$. 
	
	Properties (1) and (3) of Setting A hold verbatim in the present case too. Notice that, on the contrary, property (2) does not make sense as it stands, because $\mathcal{M}(X)$ is not a space of distributions on an open set of $Y$, and hence $\mathrm{Sol}(E;\mathcal{M}(X))$ is not well-defined. The reader may easily prove the substitute property \begin{enumerate}
		\item[(2')] $\sigma\in \mathcal{M}_0\Longrightarrow \sigma_{|D}\in  \mathrm{Sol}(E;\mathcal{M}(D))$.
	\end{enumerate}
	\end{ex3}
The interested reader may formulate an analogue of Theorem \ref{settingA_thm} for Setting B.

We remark that under the very mild regularity assumption that $bD=X\setminus D$ has $\mu$-measure $0$ (a property that is independent of $\mu$) one can, and should, identify $L^2(X,\mu)=L^2(D,\mu)$, while keeping in mind that weak-$\star$ limits of sequences in $H\subseteq L^2(D,\mu)$ are taken with respect to the duality with $C(X)=C(\overline{D})$, and may thus have mass on the boundary $bD$. 

If $Y$ is a complex manifold equipped with a smooth positive measure $\mu$, and $D\subseteq Y$ is a precompact domain such that $bD$ has measure zero, then the operator \[B_{D,\mu}:L^2(D,\mu)\rightarrow \mathcal{O}(D)\cap L^2(D,\mu)\]
defined above is the \emph{Bergman projection} of the domain $D$ with respect to the weight $\mu$.

\subsection{Applications to Bergman and Szeg\"o projections}\label{bergman_sec}

We recall that a domain $D$ in a manifold $Y$ is said to be \emph{smoothly bounded} if it admits a smooth defining function, that is, a function $\rho\in C^\infty(V;\R)$, where $V$ is an open neighborhood of $bD$, such that $d\rho\neq 0$ on $bD$ and $D\cap V=\{\rho<0\}$. The boundary of a smoothly bounded domain is a real hypersurface of $Y$, and in particular it has measure zero (see the discussion after Setting B in Section \ref{settings_sec}). 

We have the following theorem. 

\begin{thm}\label{bergman_thm}
	Let $Y$ be a complex manifold equipped with a smooth positive measure $\mu$, and let $D\subseteq Y$ be a precompact and smoothly bounded domain.
	Assume that the algebra $A(D):=C(\overline{D})\cap \mathcal{O}(D)$ admits at least a boundary peak point $z_0\in bD$, that is, there exists $F\in A(D)$ such that $F(z_0)=1$ and $|F(z)|<1$ for every $z\in \overline{D}\setminus\{z_0\}$. Then the Bergman projection $B_{D,\mu}$ admits no bounded extension to $L^1(D,\mu)$. 
\end{thm}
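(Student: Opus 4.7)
The plan is to deduce the statement from Theorem \ref{abstract_2_thm}, applied in Setting B of Section \ref{settings_sec} with $X=\overline{D}$, peak point $x_0=z_0$, and peaking function $f_0=F$. Condition i') of Theorem \ref{abstract_2_thm} coincides with the hypothesis on $z_0$, so the entire proof reduces to verifying condition ii'): for every $\sigma \in \mathcal{M}_0$, $\sigma\{z_0\}=0$.

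To establish ii'), I would first observe that it suffices to show $\delta_{z_0} \notin \mathcal{M}_0$. Indeed, given $\sigma=\lim_\alpha h_\alpha\mu \in \mathcal{M}_0$ with $h_\alpha \in H$, the products $F^N h_\alpha$ still lie in $H$ (since $|F^N|\leq 1$ and $H$ is closed under multiplication by bounded holomorphic functions), and hence $F^N\sigma=\lim_\alpha (F^N h_\alpha)\mu \in \mathcal{M}_0$ for every $N$. Because $F^N \to \chi_{\{z_0\}}$ pointwise on $\overline{D}$ with $|F^N|\leq 1$, dominated convergence gives $F^N \sigma \to \sigma\{z_0\}\,\delta_{z_0}$ weak-$\star$; by weak-$\star$ closedness of $\mathcal{M}_0$, $\sigma\{z_0\}\,\delta_{z_0} \in \mathcal{M}_0$. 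Thus $\sigma\{z_0\}\neq 0$ would force $\delta_{z_0}\in\mathcal{M}_0$, and by Hahn--Banach duality this is ruled out as soon as one exhibits $\phi \in C(\overline{D})$ with $\phi(z_0)\neq 0$ and $\int_{\overline{D}} \phi h\, d\mu=0$ for every $h\in H$.

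Constructing such a $\phi$ is the crux of the argument and can be done locally near $z_0$. In a holomorphic coordinate chart on an open neighborhood $U\subseteq Y$ of $z_0$ with defining function $\rho$, the condition $d\rho(z_0)\neq 0$ combined with the reality of $\rho$ yields an index $j$ with $\partial_{\bar z_j}\rho(z_0)\neq 0$. Let $\chi\in C_c^\infty(U)$ be a cutoff with $\chi\equiv 1$ near $z_0$, let $\tilde\mu$ be the smooth positive density of $\mu$ in the chart, and define
\[
\phi := \frac{\partial_{\bar z_j}(\chi\rho)}{\tilde\mu} \text{ on } U, \qquad \phi := 0 \text{ outside } U.
\]
Then $\phi\in C^\infty(\overline{D})$ and $\phi(z_0)=\partial_{\bar z_j}\rho(z_0)/\tilde\mu(z_0)\neq 0$. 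The orthogonality $\int_{\overline{D}}\phi h\, d\mu = \int_D \partial_{\bar z_j}(\chi\rho)\, h\, dV = 0$ follows from integration by parts in the $\bar z_j$-variable: the interior contribution vanishes by holomorphicity of $h$, and the boundary integral over $bD$ vanishes since $\chi\rho\equiv 0$ there. The main obstacle is precisely this construction: placing $\rho$ \emph{inside} the $\bar z_j$-derivative (rather than outside, or without a cutoff) is what simultaneously kills the boundary term and keeps $\phi(z_0)$ nonzero. With $\phi$ in hand, the conclusion follows from Theorem \ref{abstract_2_thm}.
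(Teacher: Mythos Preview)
Your argument is correct and takes a route genuinely different from the paper's. Both proofs invoke Theorem \ref{abstract_2_thm} with $x_0=z_0$ and reduce everything to verifying condition ii'), but the verifications diverge. The paper (Lemma \ref{noatom_lem}) picks a $(0,1)$ vector field $L$ tangent to $bD$ and to an exhausting family $bD_j$, shows that every $\sigma\in\mathcal{M}_0$ satisfies $\int_{\overline D} L^\dagger\varphi\,d\sigma=0$, and then, by a slicing argument, that the boundary restriction $\sigma_{|bD}$ solves a nondegenerate first-order PDE on $bD$; the absence of atoms then follows from wave front set considerations (cf.\ Lemma \ref{key_lem}). Your approach never analyses $\sigma$ directly: instead you exhibit a single continuous function $\phi=\tilde\mu^{-1}\partial_{\bar z_j}(\chi\rho)$ lying in $H^\perp$ with $\phi(z_0)\neq 0$, which gives $\delta_{z_0}\notin\mathcal{M}_0$, and your peaking-function reduction (using $F^N\sigma\in\mathcal{M}_0$ and $F^N\sigma\to\sigma\{z_0\}\delta_{z_0}$) then delivers ii'). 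This is more elementary---no tangential vector fields, no exhaustion, no wave front sets---at the price of establishing only that the atom at $z_0$ vanishes rather than all atoms; the paper's stronger lemma, by contrast, dovetails with the machinery used later in Section \ref{szego_sec}.

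One point needs a word of justification. Your integration by parts is asserted for arbitrary $h\in L^2(D)\cap\mathcal{O}(D)$, which need not extend continuously to $bD$, so the ``boundary integral over $bD$'' does not make literal sense. The standard fix is to approximate $\chi\rho$ from inside $D$ by $g_\epsilon:=\chi\rho\,\eta(\rho/\epsilon)$, where $\eta\in C^\infty(\R)$ vanishes on $[-1,\infty)$ and equals $1$ on $(-\infty,-2]$. Each $g_\epsilon\in C_c^\infty(D)$, so $\int_D h\,\partial_{\bar z_j}g_\epsilon\,dV=0$ by holomorphicity; moreover $\partial_{\bar z_j}g_\epsilon\to\partial_{\bar z_j}(\chi\rho)$ pointwise on $D$ with a uniform bound (since $|\rho\,\eta'(\rho/\epsilon)/\epsilon|\lesssim 1$), and dominated convergence (using $h\in L^1(D)$) gives $\int_D h\,\partial_{\bar z_j}(\chi\rho)\,dV=0$. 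With this in place your proof is complete.
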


We derive Theorem \ref{bergman_thm} from Theorem \ref{abstract_2_thm}, applied to \begin{eqnarray*}
	X=\overline D,\quad \mu, \quad H=\mathcal{O}(D)\cap L^2(D,\mu),\quad A=C(\overline{D})\cap \mathcal{O}(D),
\end{eqnarray*} 
as in Setting B. Assumption i') of Theorem \ref{abstract_2_thm} is contained in the statement, and assumption ii') is a consequence of the following lemma. 

\begin{lem}\label{noatom_lem} Let $Y$ be a complex manifold equipped with a smooth positive measure $\mu$, and let $D\subseteq Y$ be a precompact and smoothly bounded domain.
	Let $\{h_k\}_{k}\subset \mathcal{O}(D)\cap L^1(D,\mu)$, and assume that $\sigma=\lim_k h_k\mu\in \mathcal{M}(\overline{D})$ in the weak-$\star$ topology of $\mathcal{M}(\overline{D})$. Then $\sigma$ is nonatomic, that is, $\sigma\{z\}=0$ for every $z\in \overline{D}$. 
\end{lem}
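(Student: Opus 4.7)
The plan is to split into two cases according to whether the hypothetical atom of $\sigma$ lies in the interior or on the boundary of $D$, handling interior points by a normal-families argument and boundary points by reduction to the classical one-dimensional F.\ and M.\ Riesz theorem.

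\emph{Interior atoms.} First I would deal with $z_0\in D$. Weak-$\star$ convergence in $\mathcal{M}(\overline{D})$ entails $\sup_k \|h_k\|_{L^1(D,\mu)}<+\infty$. Combined with the submean value inequality applied to the plurisubharmonic functions $|h_k|$, this gives uniform $L^\infty$ bounds on every compact $K\subset D$: $\|h_k\|_{L^\infty(K)}\leq C_K$. Montel's theorem then extracts a subsequence $h_{k_j}\to h$ locally uniformly in $D$ with $h\in\mathcal{O}(D)$, forcing $\sigma_{|D}=h\mu_{|D}$, which is absolutely continuous with respect to $\mu$, and hence atomless on $D$.

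\emph{Boundary atoms.} The crux is ruling out $\sigma\{z_0\}\neq 0$ for $z_0\in bD$. Suppose such an atom exists. In local holomorphic coordinates around $z_0=0$, write $z=(z',w)\in \C^{n-1}\times \C$ and—using smoothness of $bD$, whose complex tangent space has complex codimension one—choose the coordinates so that the complex line $\{z'=0\}$ is transverse to $bD$ at $0$. Then for all small $|z'|$ the slice $D_{z'}:=\{w\in\C\colon\,(z',w)\in D\}\cap B_\C(0,\delta)$ is a smoothly bounded $1$-dimensional domain near $w=0$, and $w\mapsto h_k(z',w)$ is holomorphic on $D_{z'}$. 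The plan is to test $\int h_k\,d\mu$ against product functions $\phi(z')\psi(w)$ concentrating near $(0,0)$: by Fubini in local coordinates, a hypothetical atom at $z_0$ would force an atom at $w=0$ for some $1$-dimensional weak-$\star$ limit of holomorphic $L^1$ functions on the smooth slice. But on any smooth $1$-dimensional domain, the classical F.\ and M.\ Riesz theorem rules out such atoms (on the disk, the quick proof is that $\int w^n \tilde{h}(w)\,dA(w)=0$ for every $n\geq 1$ and every holomorphic $\tilde h$, so all positive Fourier moments of the boundary part of the limit vanish, forcing absolute continuity with respect to arc length). This contradiction completes the argument.

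\emph{Main obstacle.} The hardest step will be rigorously carrying out the slicing. Weak-$\star$ convergence of $h_k\mu$ on $\overline{D}$ does not automatically imply weak-$\star$ convergence on the individual codimension-$(n-1)$ transverse slices $L_{z'}\cap\overline{D}$, which are $\mu$-null in $D$ when $n\geq 2$. To make the reduction to 1D precise I would use a disintegration of $\sigma$ along the projection $(z',w)\mapsto z'$ combined with a Fubini argument, and justify the interchange of the $k\to\infty$ limit with the integration over $z'$ by a dominated convergence / uniform integrability argument whose uniformity is supplied by the interior submean value bounds from the first part. An alternative would be to bypass slicing entirely via a direct higher-dimensional analog of the moment-vanishing argument behind F.\ and M.\ Riesz, but the 1D reduction seems the most natural way to exploit the smoothness of $bD$.
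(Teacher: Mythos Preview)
Your interior-atom argument via Montel's theorem is fine and matches in spirit what the paper dismisses as the ``easier'' case. The substance is the boundary case, and there your own diagnosis of the obstacle is accurate---but the patches you suggest do not close it. Weak-$\star$ convergence of $h_k\mu$ on $\overline{D}$ gives no control of $h_k(0,\cdot)$ in $L^1$ of the one-dimensional slice: the submean value bounds are interior estimates and say nothing near $bD$ along a fixed complex line. Disintegrating $\sigma$ along $z\mapsto z'$ does yield conditional measures $\sigma_{z'}$, and an atom of $\sigma$ at $(0,0)$ forces $\sigma_0\{0\}\neq 0$; but nothing says $\sigma_0$ is a weak-$\star$ limit of holomorphic $L^1$ functions on the slice, which is exactly what your 1D F.\ and M.\ Riesz step needs. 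Pushing forward by the $w$-projection destroys holomorphicity because the fibre of $D$ over $w$ varies with $w$. (Incidentally, the vanishing moments on the disk are $\int \overline{w}^{\,m}\tilde h\,dA=0$ for $m\ge 1$, not $w^m$.)

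The paper bypasses slicing entirely. It chooses a smooth $(0,1)$ vector field $L$ near $z_0$ that is \emph{tangent} to $bD$ and to each level of a smooth exhaustion $D_j\Subset D$, and does not vanish at $z_0$; such a field exists because $T_{1,0}Y\cap \C T(bD)$ has complex rank $n-1$. Integration by parts on $D_j$ gives $\int_{D_j}h_k\,L^\dagger\varphi\,d\mu=0$, and letting $j\to\infty$ then $k\to\infty$ yields $\int_{\overline D}L^\dagger\varphi\,d\sigma=0$ for every test function $\varphi$. Since $L$ is tangential, one can then concentrate $\varphi$ on $bD$ and deduce that the restriction $\sigma_{|bD}$, viewed as a distribution on $bD$, is annihilated by a first-order operator with nonvanishing principal part at $z_0$; an elementary wave-front-set (or direct distributional) argument then forbids an atom there. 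This stays on the full boundary and never restricts to a complex line, which is precisely why it avoids the gap in your approach.
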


\begin{proof}
	The hypothesis means that \[
	\int_{\overline{D}}\varphi d\sigma = \lim_k\int_D\varphi h_kd\mu\qquad \forall \varphi\in C(\overline{D}). 
	\]
	
	We show that $\sigma$ has no atoms on the boundary. The absence of atoms in the interior is easier to prove, or follows considering smaller domains $D'\subset D$.
	
	Fix $z_0\in bD$, an exhaustion $\{D_j\}_{j\in \N}$ of $D$, with $D_j$ smoothly bounded and compactly contained in $D$, and a smooth complex vector field $L\in\Gamma(Y,\C TY)$ supported in a neighborhood of $z_0$ and satisfying the following properties: \begin{itemize}
		\item[i)] $L$ does not vanish at $z_0$;
		\item[ii)] $L$ is of type $(0,1)$, that is, $L=\sum_{j=1}^na_j\frac{\partial}{\partial \overline{z}_j}$ in holomorphic local coordinates;
		\item[iii)] $L$ is tangent to $bD_j$ for every $j$ (and then necessarily to $bD$). 
	\end{itemize} 
	Denote by $L^\dagger$ the formal adjoint of $L$ with respect to $\mu$. If $\varphi\in C^\infty(Y)$, the conditions above and the assumption that $h_k$ is holomorphic give \[
	\int_{D_j}h_kL^\dagger\varphi d\mu=0 \qquad \forall j,k. 
	\]
	Letting first $j$, and then $k$, tend to $\infty$, we conclude that \begin{equation}\label{dagger_identity}
		\int_{\overline D}L^\dagger\varphi d\sigma=0\qquad \forall \varphi\in C^\infty(Y). 
	\end{equation}
	For the sake of clarity we now localize our analysis to a neighborhood $U$ of $z_0$ and choose local (nonholomorphic) coordinates $x_1, \ldots, x_{2n}$ on $U$ with respect to which $z_0=0$, $D\cap U=\{x_{2n}<0\}$, and $L^\dagger=\sum_{j=1}^{2n-1}b_j(x)\frac{\partial}{\partial x_j}+b(x)$. Notice that there is no $\frac{\partial}{\partial x_{2n}}$ term because of the tangency assumption on $L$. Writing $x=(x',x_{2n})\in \R^{2n-1}\times \R$, identity \eqref{dagger_identity} gives in particular \[
	\int_{\{x_{2n}\leq 0\}}\eta(R x_{2n}) L^\dagger(\psi(x'))d\sigma=0,
	\] for every $\psi\in C^\infty_c(\R^{2n-1})$, $\eta\in C^\infty_c(\R)$ with small support, and $R\geq 1$. If $\eta(0)=1$, letting $R$ tend to $+\infty$, we obtain \[
	\int_{\{x_{2n}\leq 0\}}1_{\{x_{2n}=0\}}L^\dagger(\psi(x'))d\sigma=\int_{\{x_{2n}= 0\}}L^\dagger(\psi(x'))d\sigma_b(x')=0,
	\]
	where $\sigma_b$ is the restriction of $\sigma$ to the boundary. In other words, $\sigma_b$, thought of as a distribution on $bD$, solves a first order PDE with principal part $\sum_{j=1}^{2n-1}b_j(x)\frac{\partial}{\partial x_j}$. Since $L$ does not vanish at $z_0$, we have $b_j(0)\neq0$ for some $j$, and $\sigma_b$ cannot have any atom at $z_0$ (as in the proof of Theorem \ref{settingA_thm}, this is a simple consequence of Lemma \ref{key_lem} below). \end{proof}

The proof of Theorem \ref{bergman_thm} is complete. We now show that it may be applied when $Y=\C^n$, as stated in Theorem \ref{intr_bergman_thm} of the Introduction.

\begin{proof}[Proof of Theorem \ref{intr_bergman_thm}]
	We show that the hypothesis of Theorem \ref{bergman_thm} is verified by any bounded domain $D\subset \C^n$, without any smoothness assumption (notice that smoothness is nevertheless needed for the proof of Lemma \ref{noatom_lem}). We use a standard observation: $D$ is contained in a Euclidean ball $B(z_1,R)$ such that $\overline D \cap bB(z_1,R)$ consists of a single point $z_0\in b D$. Then $F(z):=R^{-2}\left\langle z-z_1,z_0-z_1\right\rangle$ is the desired peaking function.
\end{proof}

The hypotheses of Theorem \ref{bergman_thm} may be weakened: all is needed is the smoothness of $bD$ in a neighborhood of the peak point $z_0$, because this is all the smoothness one needs to prove Lemma \ref{noatom_lem}. The reader may easily formulate a corresponding generalization of Theorem \ref{intr_bergman_thm}. 

We finally prove Theorem \ref{intr_szego_thm}. 

\begin{proof}[Proof of Theorem \ref{intr_szego_thm}]
We may apply the second part of Theorem \ref{settingA_thm}. The peak point is provided by the (standard) argument proving Theorem \ref{intr_bergman_thm} above (restrictions of holomorphic functions to $bD$ are CR functions). 
	\end{proof}

\section{Two lemmas about wave front sets}\label{wf_sec}

Here we discuss two lemmas about the wave front sets of certain measures that will be used in Section \ref{szego_sec}. Recall that if $f$ is any distribution on $\R^n$, then its wavefront set $\textrm{WF}(f)\subseteq T^*\R^n$ is defined as follows: $\textrm{WF}(f)=\bigcup_{x\in \R^n}\mathrm{WF}_x(f)$ and $\xi\notin \mathrm{WF}_x(f)$ if and only if there exists $\chi\in C^\infty_c(\R^n)$ such that $\chi(x)\neq0$ and the Fourier transform $\widehat{\chi f}(\xi)$ decays rapidly in a conical neighborhood of $\xi$. Here we are identifying $T_x^*\R^n$ with $\R^n$ as usual. The notion is local and invariant under diffeomorphisms, and thus it is possible to define the wave front set of a distribution on a manifold. We refer to \cite[Chapter VIII]{hormanderI} for the theory of wave front sets. 

The first lemma is a little variant of Theorem 8.1.5 of \cite{hormanderI}. We give a proof for the sake of completeness.

\begin{lem}\label{singular_measure_lem}
	If $\sigma$ is a regular complex Borel measure on a smooth manifold $Y$, and $M$ is a smooth embedded submanifold such that the restriction $\sigma_{|M}$ is not the zero measure, then \begin{equation}\label{wf_fact}
		\textrm{WF}(\sigma)\supset \{(x,\xi)\in T^*Y\colon \ x\in S \text{ and } 0\neq \xi\perp T_xM\},
	\end{equation}
	where $S$ is the support of $\sigma_{|M}$ and $\perp$ denotes orthogonality with respect to the pairing of co-vectors and vectors.
\end{lem}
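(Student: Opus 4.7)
The plan is to follow the strategy of H\"ormander's Theorem 8.1.5, reducing to a Euclidean local model and then deriving a contradiction by evaluating one regularizing integral in two different ways. Fix $x_0 \in S$ and a nonzero $\xi_0 \in T_{x_0}^*Y$ with $\xi_0 \perp T_{x_0}M$. Since $\mathrm{WF}$ is local and diffeomorphism-invariant, I may assume $Y = \R^n$, $M = \R^k \times \{0\}$, $x_0 = 0$, and $\xi_0 = (0, \eta''_0)$ with $\eta''_0 \in \R^{n-k} \setminus \{0\}$; I will write points of $\R^n$ as $x = (x', x'')$ and frequencies as $\xi = (\xi', \xi'')$.

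Suppose for contradiction that $(0, \xi_0) \notin \mathrm{WF}(\sigma)$, so there exist $\chi_0 \in C^\infty_c(\R^n)$ with $\chi_0(0) \neq 0$ and an open cone $\Gamma \subset \R^n \setminus \{0\}$ containing $\xi_0$ such that $\widehat{\chi_0 \sigma}$ decays rapidly on $\Gamma$. Since $\widehat{h\chi_0\sigma} = \widehat h * \widehat{\chi_0\sigma}$ and $\widehat h$ is Schwartz for every $h \in C^\infty_c(\R^n)$, multiplication by such a cutoff $h$ preserves rapid decay on a slightly shrunken cone. I would use this freedom to further arrange, writing $\chi := h\chi_0$, that $\chi$ has arbitrarily small support around $0$ and satisfies
\[
\int_{\R^k} \chi(x', 0)\, d\sigma|_M(x') \neq 0.
\]
Indeed, if this integral vanished for every admissible $h$, then $\chi_0(\cdot, 0)\, d\sigma|_M$ would vanish as a measure near $0$, which together with $\chi_0(0) \neq 0$ would contradict $0 \in \mathrm{supp}(\sigma|_M) = S$.

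Next, choose a Schwartz function $\psi$ on $\R^{n-k}$ whose Fourier transform $\widehat\psi$ is smooth, compactly supported in a narrow open cone $\Gamma' \subset \R^{n-k} \setminus \{0\}$ around $-\eta''_0/|\eta''_0|$, and normalized so that $\psi(0) = (2\pi)^{-(n-k)}\int \widehat\psi = 1$. Shrink $\Gamma'$ if necessary so that $\{(0, -\eta'') : \eta'' \in \Gamma'\} \subset \Gamma$. For $\lambda > 0$ consider
\[
I(\lambda) := \int_{\R^n} \chi(x)\, \psi(\lambda x'')\, d\sigma(x).
\]
On the one hand, $\psi(\lambda x'') \to \mathbf{1}_{\{x'' = 0\}}(x)\cdot \psi(0)$ pointwise as $\lambda \to \infty$ and $\|\psi\|_\infty < \infty$, so dominated convergence against the finite measure $|\sigma|$ gives $\lim_{\lambda \to \infty} I(\lambda) = \int \chi(x', 0)\, d\sigma|_M(x') \neq 0$. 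On the other hand, Fourier inversion in $x''$ combined with Fubini yields
\[
I(\lambda) = (2\pi)^{-(n-k)} \int_{\R^{n-k}} \widehat\psi(\eta'')\, \widehat{\chi\sigma}(0, -\lambda \eta'')\, d\eta'',
\]
and for $\eta'' \in \mathrm{supp}(\widehat\psi)$ the covector $(0, -\lambda \eta'')$ lies in $\Gamma$ with norm $\gtrsim \lambda$, so rapid decay of $\widehat{\chi\sigma}$ on $\Gamma$ bounds $|I(\lambda)| \leq C_N \lambda^{-N}$ for every $N$. This contradicts the first computation, so $(0, \xi_0) \in \mathrm{WF}(\sigma)$.

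The main technical obstacle will be the combined requirement on $\chi$, namely rapid decay of $\widehat{\chi\sigma}$ on a prescribed cone together with nonvanishing of $\int \chi(\cdot, 0)\, d\sigma|_M$; this is handled by the convolution/shrinking step sketched above, which uses both the Schwartz nature of $\widehat h$ and the assumption $x_0 \in S$. Everything else is a routine rescaling and Fourier-inversion argument.
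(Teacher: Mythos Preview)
Your proof is correct and follows essentially the same route as the paper: both reduce to the flat local model $M=\R^k\times\{0\}$, pick a cutoff whose integral against $\sigma_{|M}$ is nonzero, and then use a rescaled transverse bump to isolate the mass on $M$. The only cosmetic difference is that the paper computes $\widehat{\psi_{\varepsilon,t}\sigma}(0,\xi'')$ directly and shows it stays bounded away from zero for cutoffs of arbitrarily small support, whereas you package the same computation as a contradiction, pairing $\chi\sigma$ with a wave packet $\psi(\lambda x'')$ whose Fourier support lies in the putative cone of rapid decay.
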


\begin{proof}
	Since \eqref{wf_fact} is of local nature and it is invariant under diffeomorphisms, we can assume without loss of generality that $Y$ is a neighborhood of the origin in $\R^N$, that $M=\{x \in Y\colon x_{m+1}=\ldots=x_N=0\}$ for some $m<N$ (the case $\dim M=\dim Y$ is trivial), and that $x=0\in S$. We write the space variables as $x=(x',x'')\in \R^m\times \R^{N-m}$ and the cotangent variables as $\xi=(\xi',\xi'')\in \R^m\times \R^{N-m}$, identifying $T_x^*\R^N\equiv\R^N$ in the usual way. Since $0$ is in the support of $\sigma_{|M}$, for every small $\varepsilon>0$ there exists a test function $\varphi_\varepsilon(x')$ supported on $\{|x'|<\varepsilon\}$ such that $\varphi_\varepsilon(0)\neq0$ and $\int_M\varphi_\varepsilon d\sigma_{|M}\neq 0$. If $\xi=(0,\xi'')$ and we put $\psi_{\varepsilon, t}(x):=\varphi_\varepsilon(x')\eta(tx'')$, where $\eta(x'')$ is a test function supported on $\{|x''|<1\}$ such that $\eta(0)=1$ and $t$ is a large positive constant, we have \begin{eqnarray*}
		\left|\widehat{\psi_{\varepsilon, t}\sigma}(\xi)-\int_M\varphi_\varepsilon d\sigma_{|M}\right| &=& \left|\int_{Y\setminus M}e^{-i\xi\cdot x}\psi_{\varepsilon, t}d\sigma\right|\\
		&\leq& \int_{Y\setminus M}|\psi_{\varepsilon, t}|d|\sigma|\\
		&\leq& ||\varphi_\varepsilon||_\infty||\eta||_\infty\cdot |\sigma|\{x\in Y\colon 0<|x''|\leq t^{-1}\},
	\end{eqnarray*}
	where $|\sigma|$ is the total variation of $\sigma$. By the continuity from above of $|\sigma|$, we may find $t$ so large that the last quantity is strictly smaller than the modulus of $\int_M\varphi_\varepsilon d\sigma_{|M}$. By the arbitrariness of $\varepsilon$, we can thus ensure that $\psi_{\varepsilon, t}$ has arbitrarily small support near $0$ and that $\widehat{\psi_{\varepsilon, t}\sigma}(\xi)$ is not decaying as $\xi=(0,\xi'')$ goes to $\infty$ along any ray. This implies that $(0,\xi'')\in \textrm{WF}_0(\sigma)$ for every $\xi''\neq 0$, as we wanted. 
\end{proof}

Our second lemma on wave front sets deals with a more specialized setting. We recall that a submanifold $M$ of a CR manifold $X$ is \emph{characteristic} at $x\in M$ if $T_xM\subset H_xX=\Re(T_{1,0}X)_x$. 

\begin{lem}\label{key_lem}	
	Let $(X,T_{1,0}X)$ be a CR manifold of hypersurface type and let $\mu$ be a smooth positive measure. Assume that $\sigma$ is a CR regular complex Borel measure. Finally, assume that $M\subset X$ is a connected smooth embedded submanifold of positive codimension such that $\sigma_{|M}$ is not the zero measure. 
	
	Then $M$ has codimension one and it is characteristic at every point of the support of $\sigma_{|M}$. \end{lem}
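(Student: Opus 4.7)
The plan is to combine Lemma \ref{singular_measure_lem} with the classical inclusion of the wave front set of a distributional solution of a first order PDE inside the characteristic variety of the operator (Hörmander, Theorem 8.3.1, extended to systems by intersection). Since $\sigma$ is CR, we have $\overline{L}\sigma=0$ in $\mathcal{D}'(X)$ for every local smooth section $\overline{L}$ of $T_{0,1}X$, and therefore
\[
\textrm{WF}(\sigma)\subseteq \Sigma:=\bigcap_{\overline{L}}\textrm{Char}(\overline{L}),
\]
the intersection being taken over all such local sections. This reduces the problem to a comparison between $\Sigma$ and the conormal directions of $M$.

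First I would unpack what $\Sigma$ looks like fiberwise. A real covector $\xi\in T_x^*X\setminus\{0\}$ lies in $\textrm{Char}(\overline{L})$ precisely when the complex-linearly extended pairing $\langle \xi, \overline{L}(x)\rangle$ vanishes, which for real $\xi$ is equivalent to $\xi$ annihilating both the real and imaginary parts of $\overline{L}(x)$. As $\overline{L}$ ranges over a local frame of $T_{0,1}X$, these real and imaginary parts span the horizontal bundle $H_xX=\Re(T_{1,0}X\oplus T_{0,1}X)_x$. Since $(X,T_{1,0}X)$ is of hypersurface type, $H_xX$ has real codimension one in $T_xX$, so $\Sigma$ reduces in each fiber to the annihilator line $(H_xX)^\perp\subset T_x^*X$.

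Next I apply Lemma \ref{singular_measure_lem}: at every point $x$ in the support $S$ of $\sigma_{|M}$, every nonzero element of $(T_xM)^\perp$ lies in $\textrm{WF}(\sigma)$, and hence in the line $(H_xX)^\perp$. Since $M$ has positive codimension, $(T_xM)^\perp$ is a nonzero linear subspace contained in a one-dimensional space, so $\textrm{codim}(M)=1$ and, at such a point, $(T_xM)^\perp=(H_xX)^\perp$, i.e.\ $T_xM=H_xX$. Connectedness of $M$ then propagates the codimension-one property globally, giving the desired conclusions: $M$ is a real hypersurface, and it is characteristic at every point of $S$.

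The one step requiring care is the identification of the real characteristic set $\Sigma$ with the conormal line to $HX$; once this is in hand, the argument is pure linear algebra and the invocation of the two external tools (Hörmander's bound on wave front sets of solutions, and Lemma \ref{singular_measure_lem}). I anticipate no serious obstacle, since the hypersurface type assumption is precisely what makes the real characteristic fiber one-dimensional and thus forces $M$ into codimension one.
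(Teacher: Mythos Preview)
Your proposal is correct and follows essentially the same route as the paper: both combine H\"ormander's Theorem 8.3.1 (yielding $\textrm{WF}(\sigma)\subset (HX)^\perp$) with Lemma \ref{singular_measure_lem} (yielding $(T_xM)^\perp\subset \textrm{WF}_x(\sigma)$ on the support of $\sigma_{|M}$), and then the hypersurface-type assumption makes $(H_xX)^\perp$ a line, forcing $\mathrm{codim}\,M=1$ and $T_xM=H_xX$ there. The only cosmetic difference is that the paper writes things out in local coordinates while you argue invariantly.
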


\begin{proof}
	Since the conclusion is local, we may assume that $X$ is a neighborhood of $0\in \R^{2n+1}$, where the bundle $T_{0,1}X$ is generated by \[
	L_j:=\sum_{k=1}^{2n+1}a_{jk}(x)\partial_{x_k} \qquad (j=1,\ldots, n),
	\] for certain smooth complex-valued functions $a_{jk}(x)$, and $\mu$ may be identified with a smooth positive function on $X$. The assumption is that \[
	\int_XL_j^\dagger \varphi d\sigma = 0\qquad\forall \varphi\in C^\infty_c(X),
	\] where \[ L_j^\dagger=-\sum_{k=1}^{2n+1}a_{jk}\partial_{x_k}-\mu^{-1}\sum_{k=1}^{2n+1}\partial_{x_k}(\mu a_{jk})
	\] Thus, $\sigma$ satisfies a system of first order PDEs with principal parts given by the $L_j$'s. By a theorem of H\"ormander (Theorem 8.3.1 of \cite{hormanderI}), \[
	\textrm{WF}(\sigma)\subset\bigcap_j\textrm{Char}(L_j),
	\]
	where the characteristic sets are defined by \[\textrm{Char}(L_j)=\left\{(x,\xi)\in T^*X\colon\ \xi\neq 0,\quad \sum_{k=1}^{2n+1}a_{jk}(x)\xi_k=0\right\},\]
	and we are identifying $T_x^*X$ with $\R^{2n+1}$. Thus, \begin{equation}\label{inclusion_wf1}
		\textrm{WF}(\sigma)\subset\left\{(x,\xi)\in T^*X\colon \xi\neq 0,\quad \xi\perp HX\right\},
	\end{equation}
	because $HX$ is generated by \[
	\Re(L_j)=\sum_{k=1}^{2n+1}\Re(a_{jk})(x)\partial_{x_k},\quad  \Im(L_j)=\sum_{k=1}^{2n+1}\Im(a_{jk})(x)\partial_{x_k}\qquad (j=1,\ldots, n). 
	\]
	
	Using Lemma \ref{singular_measure_lem}, we get the inclusion \begin{equation}\label{inclusion_wf2}
		\textrm{WF}(\sigma)\supset \{(x,\xi)\in T^*X\colon \ x\in S  \text{ and } 0\neq \xi\perp T_xM\},
	\end{equation} where $S$ is the support of $\sigma_{|M}$. Combining \eqref{inclusion_wf1} and \eqref{inclusion_wf2}, we obtain that $H_xX\subset T_xM$ at every $x\in S$. Since $M$ has positive codimension, this forces $T_xM$ to be equal to $H_xX$ at every such point, proving that $M$ has codimension one and is characteristic at points of the support of $\sigma_{|M}$. \end{proof}

\section{Szeg\"o projections on real analytic CR manifolds}\label{szego_sec}

In this section we prove Theorem \ref{intr_CR_thm} of the Introduction. Let $X$ be a compact, connected and real analytic CR manifold of hypersurface type $(X,T_{1,0}X)$ equipped with a smooth positive measure $\mu$. We refer to Setting A of Section \ref{settings_sec}, where the Szeg\"o projection $\Pi:L^2(X,\mu)\rightarrow \mathrm{CR}^2(X,\mu)$ is defined. Recall that $\mathrm{CR}^2(X,\mu)$ is the space of $L^2$ CR functions on $X$. We need the following definition, where the measure $\mu$ plays no r\^{o}le. 

\begin{dfn}[Exceptional real analytic CR structures]\label{B_def}
We say that $(X,T_{1,0}X)$ is exceptional if the following property holds: a $\C$-valued real analytic function $f$ is CR if and only if both $\Re(f)$ and $\Im(f)$ are CR. 	
\end{dfn}

Notice that if both $\Re(f)$ and $\Im(f)$ are CR, then $f$ is CR. Hence, since $f$ is CR if and only if $if$ is CR, to verify that $(X,T_{1,0}X)$ is exceptional it is enough to check that if $f$ is real analytic and CR, then $\Re(f)$ is CR. 

The following is our main result about $L^1$ (un)boundedness of Szeg\"o projections.

\begin{thm}\label{CR_thm} Under the assumptions above, if the Szeg\"o projection $\Pi$ admits a bounded extension to $L^1(X,\mu)$, then $(X,T_{1,0}X)$ is exceptional. 	
\end{thm}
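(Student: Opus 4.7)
The plan is to argue by contradiction, combining the representing-measure/Bishop--Mergelyan machinery of Theorem~\ref{abstract_thm} with the wave-front Lemma~\ref{key_lem} and a real analytic stratification count.

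I would first recast the exceptional condition as a rank condition. For a CR $f$, the Cauchy--Riemann condition $(v+iJv)f=0$ for $v\in HX$ forces $df(Jv)=i\,df(v)$, so $df|_{HX}\colon HX\to\C$ is complex linear with respect to the integrable complex structure $J$ on $HX$ and the standard $i$ on $\C$. Its real rank is therefore $0$ or $2$, and combined with the one-dimensional Reeb direction one has $\mathrm{rank}_{\R}df\in\{0,1,2\}$ pointwise, with $\mathrm{rank}_{\R}df\leq 1$ equivalent to $df|_{HX}=0$, equivalent in turn to the simultaneous CR-ness of $\Re f$ and $\Im f$. Hence $(X,T_{1,0}X)$ is exceptional if and only if every real analytic CR function has $\mathrm{rank}_{\R}df\leq 1$ everywhere, and it suffices to show that $L^1$ boundedness of $\Pi$ rules out the existence of a real analytic CR $f$ with $\mathrm{rank}_{\R}df_{x_0}=2$ at some point $x_0$.

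Suppose such an $f$ exists. Set $K:=f(X)$, compact with nonempty interior by the open mapping property at $x_0$, and $V_z:=f^{-1}\{z\}$. By Lemma~\ref{representing_meas_lem} each $x\in X$ has a representing measure $\sigma_x\in\mathcal{M}_0\subseteq\mathrm{Sol}(T_{0,1}X;\mathcal{M}(X))$ (the inclusion being property~(2) of Setting~A), so $\sigma_x$ is a CR complex Borel measure. The Bishop--Mergelyan argument of Theorem~\ref{abstract_thm} then applies verbatim with $f_0=f$, since polynomials in $f$ are real analytic CR and their uniform limits on $K$ lie in $C(X)\cap H$; dominated convergence yields
\[
\sigma_x(V_z)=1\qquad \forall z\in b_{\mathrm{out}}K,\ \forall x\in V_z.
\]

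The contradiction now comes from stratifying the fibers. The critical set $C_f:=\{y\in X\colon\mathrm{rank}_{\R}df_y<2\}$ is a proper real analytic subset of $X$, and the openness of $f$ at its regular points gives $V_z\subseteq C_f$ for every $z\in bK\supseteq b_{\mathrm{out}}K$. By the classical theory of real analytic stratifications on a compact real analytic manifold, $C_f$ admits a finite Whitney stratification into connected smooth real analytic submanifolds; let $S_1,\ldots,S_m$ be its top-dimensional strata and $Z\subset\C$ the finite set $\{z\in\C\colon f|_{S_i}\equiv z\text{ for some }i\}$. Since $b_{\mathrm{out}}K$ is uncountable by Proposition~\ref{outer_prop}, we may pick $z\in b_{\mathrm{out}}K\setminus Z$. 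If $V_z$ had a codimension-one smooth stratum $M$, then $\dim M=2n$ together with $M\subseteq V_z\subseteq C_f$ would force $\dim C_f=2n$ and $M$ to be open in some connected top stratum $S_i$; real analyticity of $f|_{S_i}$ combined with $f\equiv z$ on $M$ would then force $f|_{S_i}\equiv z$, contradicting $z\notin Z$. Hence every stratum of $V_z$ has codimension at least two, and the contrapositive of Lemma~\ref{key_lem} makes $\sigma_x$ vanish on each; summing the finitely many vanishing contributions gives $\sigma_x(V_z)=0$, in direct contradiction with $\sigma_x(V_z)=1$. The most delicate points I foresee are the rank reformulation above (which relies on the $\C$-linearity of $df|_{HX}$ for CR $f$ to exclude real rank $1$ on $HX$) and the real analytic geometry underlying the Whitney stratification of $C_f$; everything else reduces cleanly, via Setting~A, to the representing measure mechanism of Theorem~\ref{abstract_thm} and Lemma~\ref{key_lem}.
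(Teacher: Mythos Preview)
Your argument is correct and uses the same toolkit as the paper (representing measures via Lemma~\ref{representing_meas_lem}, the Bishop--Mergelyan peaking of Corollary~\ref{bishop_mergelyan_cor}, Lemma~\ref{key_lem}, and real analytic stratification), but organized along a genuinely different line. The paper does not argue by contradiction: it fixes a real analytic CR $f$, sets $u=\Re f$, and shows directly that $u$ is CR. It works with the \emph{real} level sets $V_s=\{u=s\}$, which by real analytic Sard are smooth hypersurfaces for all but finitely many $s$; the identity $\sigma_x(f^{-1}\{s+it(s)\})=1$ then forces $\sigma$ to charge some component $M$ of $V_s$, and Lemma~\ref{key_lem} is applied \emph{twice}---first to see that $M$ is characteristic along $\mathrm{supp}(\sigma|_M)$, and then (after invoking \L ojasiewicz's structure theorem on the characteristic locus $C\subset M$) to rule out any lower-dimensional stratum of $C$ carrying mass, forcing $C=M$. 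This yields $du|_{HX}=0$ on $\bigcup_s V_s^0$, and analyticity propagates. Your route instead recasts ``exceptional'' as the pointwise rank bound $\mathrm{rank}_\R df\le 1$, observes that boundary fibers $V_z$ sit inside the critical set $C_f$, and uses a global stratification of $C_f$ to exclude, for all but finitely many $z\in b_{\mathrm{out}}K$, any codimension-one piece in $V_z$; a single contrapositive application of Lemma~\ref{key_lem} then gives $\sigma_x(V_z)=0$, contradicting $\sigma_x(V_z)=1$. Your version is a tidier reductio and avoids the paper's second pass through Lemma~\ref{key_lem}; the price is a slightly heavier reliance on the global finiteness of a real analytic Whitney stratification of $C_f$ (and on the invariance-of-domain step making a $2n$-dimensional stratum of $V_z$ open in a top stratum $S_i$ of $C_f$), while the paper's level-set approach via $u$ dovetails more transparently with the Levi-foliation picture exploited in Proposition~\ref{typeBcharacterization_prop}.
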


The next two sections contain an alternative characterization of the notion of exceptional CR structure and a proof of Theorem \ref{CR_thm}, respectively. Combining these two items, we get Theorem \ref{intr_CR_thm}. 

\subsection{An alternative characterization of exceptional CR structures}\label{exceptional_sec}

We recall that a CR manifold is said to be \emph{Levi flat} if the horizontal bundle $HX=\Re(T_{1,0}X)$ or, equivalently, the complex vector bundle $T_{1,0}X\oplus T_{0,1}X$, is involutive. In this case, $X$ is foliated by leaves carrying a natural complex structure, see \cite{barletta_dragomir} for a detailed discussion. This foliation is usually called the \emph{Levi foliation} of $X$. If $X$ is of hypersurface type, which is the case of interest to us, the leaves have real codimension one.

\begin{prop}\label{typeBcharacterization_prop}
	Let $(X,T_{1,0}X)$ be a compact, connected, real analytic CR manifold of hypersurface type. Then $(X,T_{1,0}X)$ is exceptional if and only if one of the following two conditions holds:
	\begin{enumerate}
		\item there are no nonconstant real analytic CR functions; 
		\item $(X,T_{1,0}X)$ is Levi flat and every leaf of the Levi foliation is compact. 
	\end{enumerate}
\end{prop}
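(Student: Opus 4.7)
My plan is to prove the two implications separately. For the direction that (1) or (2) implies exceptional: case (1) is vacuously true, so assume (2). Each leaf $L$ of the Levi foliation is a compact connected complex manifold of complex dimension $n$, with $T_{1,0}L = T_{1,0}X|_L$ (using that Levi flatness makes $\C\otimes HX = T_{1,0}X\oplus T_{0,1}X$ integrable, so $\C TL = T_{1,0}X|_L\oplus T_{0,1}X|_L$). Any real analytic CR function $f$ restricts to a holomorphic function on each compact connected leaf and hence is constant there by the maximum principle. This gives $Vf=0$ for every $V\in HX$, and by complexification $Zf=0$ for every $Z\in T_{1,0}X\oplus T_{0,1}X$. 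In particular $\overline{f}$ is CR, and so are $\Re(f)$ and $\Im(f)$.

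For the converse, assume $(X,T_{1,0}X)$ is exceptional and that (1) fails, so there is a nonconstant real analytic CR function $f$. By exceptionality, $\overline f$ is CR; hence $\Re(f)$ and $\Im(f)$ are real-valued real analytic CR functions, and at least one of them is nonconstant. Call it $u$. Being real-valued and annihilated by $T_{0,1}X$, $u$ is also annihilated by $T_{1,0}X$ (by complex conjugation), so $du$ vanishes on $HX$. On the open set $\Omega := \{du\neq 0\}$, which is dense by real analyticity and nonconstancy of $u$, the level sets of $u$ are smooth real hypersurfaces whose tangent spaces contain $HX$ and, by dimension count, coincide with it. The Frobenius theorem then gives involutivity of $HX$ on $\Omega$. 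The tensor measuring failure of involutivity of $HX$ has real analytic coefficients in any local real analytic frame of $T_{1,0}X$, so vanishing on the dense set $\Omega$ forces it to vanish on $X$. Thus $X$ is Levi flat.

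It remains to show that every leaf of the Levi foliation is compact. I would establish this by proving that $\{u = c\}$ is a smooth closed embedded hypersurface of $X$ for every $c\in\R$. At regular points of $u$ this is immediate. At a critical point $x_0$ with $u(x_0)=c$, choose a real analytic local first integral $\rho$ of the (now involutive) distribution $HX$ near $x_0$ with $d\rho\neq 0$; then $u$ is constant on the local leaves $\{\rho=\text{const}\}$, so $u = F(\rho)$ locally for a real analytic function $F$ of one real variable, which is nonconstant because $u$ is nonconstant and $\rho$ is a local submersion. Since a nonconstant real analytic function of one variable has isolated zeros, $F-c$ vanishes only at $\rho(x_0)$ near $\rho(x_0)$, so $\{u=c\}$ coincides near $x_0$ with the smooth hypersurface $\{\rho=\rho(x_0)\}$. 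Consequently $\{u=c\}$ is a smooth closed embedded hypersurface tangent to $HX$, whose connected components are compact integral manifolds of $HX$. By maximality of leaves, each leaf equals a connected component of some $\{u=c\}$ and is therefore compact. The main obstacle is exactly this last step, where level sets of $u$ could a priori be singular at critical points; the resolution is the one-variable reduction $u=F(\rho)$ via a local first integral of the Levi foliation.
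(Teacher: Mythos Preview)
Your proof is correct. The argument for the \emph{if} direction and for Levi flatness in the \emph{only if} direction matches the paper essentially word for word.

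Where you diverge is in establishing compactness of the leaves. The paper invokes Inaba's theorem (continuous CR functions on a compact Levi flat CR manifold are constant on every leaf, compact or not) and then runs a contradiction argument: if some leaf $M$ accumulates at $x_0\notin M$, a foliated chart near $x_0$ together with Inaba's theorem and real analyticity forces $f$ to be globally constant. Your route is more direct and avoids the external reference entirely: using the real analytic Frobenius theorem you produce a real analytic local first integral $\rho$ of $HX$, write $u=F(\rho)$ with $F$ real analytic in one variable, and use isolatedness of zeros of $F-c$ to conclude that every level set $\{u=c\}$ is a smooth closed hypersurface whose components are exactly the leaves. This buys you a self-contained proof and the slightly sharper statement that the leaves are precisely the connected components of the level sets of $u$. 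The paper's approach, on the other hand, would still work if $f$ were merely continuous (Inaba's theorem does not require real analyticity of the CR function), so it isolates more clearly which part of the argument genuinely needs the real analytic hypothesis.
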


As remarked above, Theorem \ref{CR_thm} and Proposition \ref{typeBcharacterization_prop} combined give Theorem \ref{intr_CR_thm}. 

\begin{proof}
Let us prove the \emph{only if} part first. Let $(X,T_{1,0}X)$ be as in the statement and assume that there exists a nonconstant real-analytic CR function $f$. Without loss of generality, we may assume that $u:=\Re(f)$ is nonconstant. Since $(X,T_{1,0}X)$ is exceptional, $u$ is CR. Given a local section $L$ of $T_{1,0}X$, we have $Lu=\overline{L}u=0$ (because $u$ is real-valued) and therefore $HX\subset \ker(du)$. Hence, on the open set $\Omega:=\{x\in X\colon du(x)\neq 0\}$ we have that $HX=\ker(du)$. Thus, $HX$ is involutive on $\Omega$. Since $\Omega$ is dense, because $X$ is connected and $u$ is real analytic, $X$ is Levi flat. 

We need to show that every leaf of the Levi foliation is closed. To this end, we employ a theorem of Inaba (Theorem 1 of \cite{inaba}) asserting that continuous CR functions on a compact Levi flat CR manifold are constant on the leaves of the Levi foliation. Notice that CR functions are holomorphic on the leaves (with respect to the induced complex structure), and hence necessarily constant on compact leaves: Inaba's theorem guarantees that the same is true on noncompact leaves, whose existence we have not ruled out yet. Thus, let us argue by contradiction and assume that there is $x_0\in X$ and a leaf $M$ such that $x_0\notin M$ and $x_0$ is the limit of a sequence $\{x_k\}_k$ of points of $M$. Let $(U,\Phi)$ be a foliated chart such that:\begin{enumerate} 
		\item $U$ is an open neighborhood of $x_0$;
		\item $\Phi:U\rightarrow B\times I$ is a real-analytic diffeomorphism such that $\Phi(x_0)=(0,0)$, $B$ is a connected open neighborhood of $0$ in $\R^{2n}$, and $I\subset\R$ is an open interval containing $0$;
		\item the horizontal bundle on $U$ is the pullback of the tangent bundle to the slices $M_t=\{(z,t)\colon\ z\in B\}$ ($t\in I$). 
	\end{enumerate}
	It is clear that $\Phi(M\cap U)$ does not intersect $M_0$, while the sequence $\{x_k\}_k$ gives us a sequence $t_k\in I\setminus \{0\}$ converging to $0$ such that $M_{t_k}\subset \Phi(M\cap U)$. By Inaba's Theorem, the CR function $f$ is constant on $M$ and, in particular, $f\circ\Phi^{-1}$ is constant on $\bigcup_kM_{t_k}$. By real analyticity, $f\circ\Phi^{-1}$ is identically constant on $B\times I$, and by connectedness, $f$ must be identically constant on $X$. This is the desired contradiction.  
	
The \emph{if} part is simpler. If (1) holds, then $(X,T_{1,0}X)$ is clearly exceptional, while if (2) holds, CR functions are constant on the leaves, and the same is true of their real parts, which are therefore CR too. 
\end{proof}

An inspection of the proof shows that if $(X,T_{1,0}X)$ is Levi flat, then it must be exceptional (a noncompact leaf forces every real analytic CR function to be constant).

\subsection{Proof of Theorem \ref{CR_thm}}

First of all, for every $x\in X$, Lemma \ref{representing_meas_lem} yields a measure $\sigma_x\in\mathcal{M}_0$ such that 
 \begin{equation}\label{representing}
 	\int_X gd\overline{\sigma_x} = g(x)\qquad \forall g \text{ continuous and CR}.
 \end{equation}

Recalling the remark after Definition \ref{B_def}, our task is to show that if $f$ is a real analytic CR function, then $u:=\Re(f)$ is CR. We may assume without loss of generality that $u$, and a fortiori $f$, is nonconstant. Then the set $I:=\{u(x)\colon x\in X\}$ is a compact nondegenerate interval, and by Proposition \ref{outer_prop} any $s\in I$ is the projection of at least a point $s+it(s)$ in the outer boundary of $K:=f(X)$. As in the proof of Theorem \ref{abstract_thm}, we invoke Corollary \ref{bishop_mergelyan_cor} to Bishop's Theorem and Mergelyan's Theorem to get a sequence of holomorphic polynomials $\{P_{s,k}\}_k$ and a function $F_s:K\rightarrow \C$ such that: \begin{enumerate}
	\item as $k$ tends to $\infty$, $P_{s,k}$ converges to $F_s$, uniformly on $K$;
	\item $F_s$ is a peaking function relative to the point $s+it(s)$, that is, $F_s(s+it(s))=1$ and $|F_s(z)|<1$ for every $z\in K\setminus\{s+it(s)\}$.
\end{enumerate} 
Since $(P_{s,k}\circ f)^N$ is CR for every $k,N\in \N$ and $s\in I$, we can use the representing property \eqref{representing} with $x\in f^{\leftarrow}\{s+it(s)\}$, and let first $k$ and then $N$ tend to $\infty$, to conclude that
\begin{eqnarray} 
	\sigma_x(f^{\leftarrow}(s+it(s)))=1\qquad \forall s\in I, \ x\in f^{\leftarrow}\{s+it(s)\}.\label{mass} 
\end{eqnarray}

At this point the argument has to depart from that of Theorem \ref{abstract_thm}, because the measures $\sigma_x$ are not absolutely continuous with respect to $\mu$ (and we do not want to exhibit any contradiction). 

First of all, we use the real analytic Sard's theorem, stating that if $v$ is a real-valued real analytic function defined on a compact real analytic manifold $Y$, then the level sets $\{y\in Y\colon u(y)=s\}$ are either empty or smooth real-analytic hypersurfaces, for any $s$ outside a finite set of real values. This form of Sard's theorem follows immediately from Theorem 1 of \cite{souvcek_souvcek}, which states that if $D\subset\R^n$ is open and $v:D\rightarrow\R$ is real analytic, then the set of critical values of $v_{|D'}$ is finite for any open set $D'$ compactly contained in $D$. In particular, there exists a set $E:=\{s_1,\ldots, s_m\}\subset I$ such that if $s\in I\setminus E$, then $V_s:=\{x\in X\colon u(x)=s\}$ is a compact smooth real analytic hypersurface. 

Fix $s\in I\setminus E$ and $x\in f^{\leftarrow}\{s+it(s)\}$. Since $f^{\leftarrow}(s+it(s))\subset V_s$, by \eqref{mass} we have the crucial information that \emph{the complex measure $\sigma\equiv\sigma_x$ has nontrivial restriction to the smooth hypersurface $V_s$}, because it assigns measure $1$ to a Borel subset of $V_s$. Let $M$ be any component of $V_s$ such that $\sigma$ restricted to $M$ is not the zero measure (at least a component has this property). We denote by $S$ the nonempty support of $\sigma_{|M}$ and by $C$ the set of characteristic points of $M$, namely those points $x\in M$ where $T_xM=H_xX$. Since $\mathcal{M}_0\subseteq \mathrm{Sol}(T_{0,1}X;\mathcal{M}(X))$, as remarked after introducing Setting A in Section \ref{settings_sec}, a first application of Lemma \ref{key_lem} gives the inclusion $S\subset C$. 

The set $C$ is a real analytic subset of $M$. In fact, if $L_j=X_j+iY_j$ ($j=1,\ldots, n$) is a local basis of $T_{1,0}X$, with $\{X_j, Y_j\}_j$ real-analytic real vector fields, then $C$ is locally defined by the system of equations \begin{equation}\label{characteristic_set}
u=s,\quad  X_ju=0,\quad Y_ju=0\qquad j=1,\ldots, n. 
\end{equation}
See remark (1) after the statement of Theorem \ref{intr_CR_thm} in the introduction. Thus, by Lojasiewicz's structure theorem for real analytic sets (see \cite[Section 6.3]{krantz_parks}), in a neighborhood of any $x\in S$ the set $C$ is the union of finitely many smooth real analytic submanifolds of $M$. At least one of them, which we denote by $C_0$, must have the property that $\sigma_{|C_0}$ is not the zero measure. Another application of Lemma \ref{key_lem} shows that $C_0$ must have codimension one in $X$, and hence be open in $M$. By analyticity, $C_0=C=M$. 

We proved that for any $s\in I\setminus E$, the hypersurface $V_s$ has at least one component $V_s^0$ such that \eqref{characteristic_set} holds at every point of $V_s^0$. Thus, the set defined by the conditions \[
\overline{L}u=0\qquad\forall L\in \Gamma(X;T_{1,0}X)
\] is a real analytic subset of $X$ that contains $\cup_{s\in I\setminus E}V_s^0$. By analyticity and connectedness, it must be equal to the whole manifold $X$. In other words, $u$ is a CR function. This completes the proof that $X$ is exceptional. 

\bibliographystyle{alpha} 
\bibliography{L1}

\begin{thebibliography}{NRSW89}

\bibitem[Bar88]{barrett}
David~E. Barrett.
\newblock A remark on the global embedding problem for three-dimensional {CR}
  manifolds.
\newblock {\em Proceedings of the American Mathematical Society},
  102(4):888--892, 1988.

\bibitem[Bar92]{barrett_worm}
David~E. Barrett.
\newblock Behavior of the {B}ergman projection on the {D}iederich-{F}orn{\ae}ss
  worm.
\newblock {\em Acta Mathematica}, 168:1--10, 1992.

\bibitem[BCH08]{ber_cord_hou}
Shiferaw Berhanu, Paulo~D. Cordaro, and Jorge Hounie.
\newblock {\em An introduction to involutive structures}, volume~6.
\newblock Cambridge University Press Cambridge, 2008.

\bibitem[BDD07]{barletta_dragomir}
Elisabetta Barletta, Sorin Dragomir, and Krishan~L. Duggal.
\newblock {\em {F}oliations in {C}auchy-{R}iemann geometry}.
\newblock American Mathematical Soc., 2007.

\bibitem[BH01]{berhanu_hounie_2001}
Shiferaw Berhanu and Jorge Hounie.
\newblock An {F}. and {M}. {R}iesz theorem for planar vector fields.
\newblock {\em Mathematische Annalen}, 320(3):463--485, 2001.

\bibitem[BH03]{berhanu_hounie_2003}
Shiferaw Berhanu and Jorge Hounie.
\newblock Traces and the {F}. and {M}. {R}iesz theorem for vector fields.
\newblock In {\em Annales de l'Institut Fourier}, volume~53, pages 1425--1460,
  2003.

\bibitem[BH05]{berhanu_hounie_2005}
Shiferaw Berhanu and Jorge Hounie.
\newblock An {F}. and {M}. {R}iesz theorem for a system of vector fields.
\newblock {\em Inventiones Mathematicae}, 162(2), 2005.

\bibitem[BL12]{bergh_lofstrom}
J{\"o}ran Bergh and J{\"o}rgen L{\"o}fstr{\"o}m.
\newblock {\em Interpolation spaces: an introduction}, volume 223.
\newblock Springer Science \& Business Media, 2012.

\bibitem[Boc44]{bochner}
Salomon Bochner.
\newblock Boundary values of analytic functions in several variables and of
  almost periodic functions.
\newblock {\em Ann. of Math. (2)}, 45:708--722, 1944.

\bibitem[Bog91]{boggess}
Al~Boggess.
\newblock {\em {CR} manifolds and the tangential {C}auchy-{R}iemann complex},
  volume~1.
\newblock CRC Press, 1991.

\bibitem[Bru89]{brummelhuis}
Raymondus G.~M. Brummelhuis.
\newblock A microlocal {F.} and {M.} {R}iesz theorem with applications.
\newblock {\em Revista Matematica Iberoamericana}, 5(1-2):21--36, 1989.

\bibitem[BS12]{barrett_sahutoglu}
David~E. Barrett and S{\"o}nmez Sahutoglu.
\newblock Irregularity of the {B}ergman projection on worm domains in
  $\mathbb{C}^n$.
\newblock {\em Michigan Math. J}, 61(1):187--198, 2012.

\bibitem[CD06]{charpentier_dupain}
Philippe Charpentier and Yves Dupain.
\newblock Estimates for the {B}ergman and {S}zeg{\"o} projections for
  pseudoconvex domains of finite type with locally diagonalizable {L}evi form.
\newblock {\em Publicacions Matematiques}, pages 413--446, 2006.

\bibitem[Chr89]{christ}
Mike Christ.
\newblock On the $\overline\partial_b$-equation and {S}zeg{\"o} projection on
  {CR} manifolds.
\newblock In {\em Harmonic Analysis and Partial Differential Equations}, pages
  146--158. Springer, 1989.

\bibitem[Cor96]{cordaro}
Paulo~D. Cordaro.
\newblock Global hypoellipticity for $\overline\partial_b$ on certain compact
  three dimensional {CR} manifolds.
\newblock {\em Resenhas do Instituto de Matem{\'a}tica e Estat{\'\i}stica da
  Universidade de S{\~a}o Paulo}, 2(4):353--361, 1996.

\bibitem[Dan15]{danielyan}
Arthur~A. Danielyan.
\newblock On the peak points.
\newblock {\em Complex Var. Elliptic Equ.}, 60(11):1475--1479, 2015.

\bibitem[DT07]{dragomir_tomassini}
Sorin Dragomir and Giuseppe Tomassini.
\newblock {\em Differential geometry and analysis on CR manifolds}, volume 246.
\newblock Springer Science \& Business Media, 2007.

\bibitem[EM17]{edholm_mcneal}
Luke~D. Edholm and Jeffery~D. McNeal.
\newblock Bergman subspaces and subkernels: degenerate ${L}^p$ mapping and
  zeroes.
\newblock {\em The Journal of Geometric Analysis}, 27(4):2658--2683, 2017.

\bibitem[Fol95]{folland}
Gerald~B. Folland.
\newblock {\em A course in abstract harmonic analysis}.
\newblock Studies in Advanced Mathematics. CRC Press, Boca Raton, FL, 1995.

\bibitem[Gam69]{gamelin}
Theodore~W. Gamelin.
\newblock {\em Uniform algebras}.
\newblock Prentice-Hall, Inc., Englewood Cliffs, N. J., 1969.

\bibitem[H\"90]{hormanderI}
Lars H\"{o}rmander.
\newblock {\em The analysis of linear partial differential operators. {I}}.
\newblock Springer Study Edition. Springer-Verlag, Berlin, second edition,
  1990.
\newblock Distribution theory and Fourier analysis.

\bibitem[Ina92]{inaba}
Takashi Inaba.
\newblock On the nonexistence of {CR} functions on {L}evi-flat {CR} manifolds.
\newblock {\em Collect. Math.}, 43(1):83--87, 1992.

\bibitem[KP02]{krantz_parks}
Steven~G. Krantz and Harold~R. Parks.
\newblock {\em A primer of real analytic functions}.
\newblock Springer Science \& Business Media, 2002.

\bibitem[Lan02]{lang}
Serge Lang.
\newblock {\em Algebra}, volume 211 of {\em Graduate Texts in Mathematics}.
\newblock Springer-Verlag, New York, third edition, 2002.

\bibitem[LS12]{lanzani_stein}
Loredana Lanzani and Elias~M. Stein.
\newblock The {B}ergman projection in ${L}^p$ for domains with minimal
  smoothness.
\newblock {\em Illinois Journal of Mathematics}, 56(1):127--154, 2012.

\bibitem[LS17]{lanzani_stein_szego}
Loredana Lanzani and Elias~M. Stein.
\newblock The {C}auchy--{S}zeg{\"o} projection for domains in $\mathbb{C}^n$
  with minimal smoothness.
\newblock {\em Duke Mathematical Journal}, 166(1):125--176, 2017.

\bibitem[McN94]{mcneal_sing_int}
Jeffery~D. McNeal.
\newblock The {B}ergman projection as a singular integral operator.
\newblock {\em J. Geom. Anal}, 4(1):91--103, 1994.

\bibitem[Mey68]{meyer}
Yves Meyer.
\newblock Spectres des mesures et mesures absolument continues.
\newblock {\em Studia Math.}, 30:87--99, 1968.

\bibitem[Mor77]{morris}
Sidney~A. Morris.
\newblock {\em Pontryagin duality and the structure of locally compact abelian
  groups}.
\newblock Cambridge University Press, Cambridge-New York-Melbourne, 1977.
\newblock London Mathematical Society Lecture Note Series, No. 29.

\bibitem[MS94]{mcneal_stein}
Jeffery~D. McNeal and Elias~M. Stein.
\newblock Mapping properties of the {B}ergman projection on convex domains of
  finite type.
\newblock {\em Duke Mathematical Journal}, 73(1):177--199, 1994.

\bibitem[NRSW89]{nagel_rosay_stein_wainger}
Alexander Nagel, Jean-Pierre Rosay, Elias~M. Stein, and Stephen Wainger.
\newblock Estimates for the {B}ergman and {S}zeg{\"o} kernels in
  $\mathbb{C}^2$.
\newblock {\em Annals of Mathematics}, pages 113--149, 1989.

\bibitem[PS77]{phong_stein}
Duong~H. Phong and Elias~M. Stein.
\newblock Estimates for the {B}ergman and {S}zeg{\"o} projections on strongly
  pseudo-convex domains.
\newblock {\em Duke Mathematical Journal}, 44(3):695--704, 1977.

\bibitem[Rud74]{rudin}
Walter Rudin.
\newblock {\em {R}eal and {C}omplex {A}nalysis}.
\newblock McGraw Hill International Book Company, 1974.

\bibitem[SS72]{souvcek_souvcek}
Ji{\v{r}}{\'\i} Sou{\v{c}}ek and Vladim{\'\i}r Sou{\v{c}}ek.
\newblock Morse-{S}ard theorem for real-analytic functions.
\newblock {\em Commentationes Mathematicae Universitatis Carolinae},
  13(1):45--51, 1972.

\bibitem[Zey20]{zeytuncu}
Yunus~E. Zeytuncu.
\newblock A survey of the ${L}^p$ regularity of the {B}ergman projection.
\newblock {\em Complex Analysis and its Synergies}, 6:1--7, 2020.

\end{thebibliography}
\end{document}